\theoremstyle{plain}
\newtheorem{thm}{Theorem}[section]
\newtheorem{prop}[thm]{Proposition}
\newtheorem{lem}[thm]{Lemma}
\newtheorem{cor}[thm]{Corollary}
\newtheorem{defn}[thm]{Definition}
\newtheorem{rem}[thm]{Remark}
\newtheorem{conj}[thm]{Conjecture}
\def\vol{\mathop{\mathrm{Vol}}\nolimits}
\numberwithin{equation}{section}
\title{A necessary condition for Chow semistability of polarized
toric manifolds}
\author{Hajime Ono}
\address{Department of Mathematics,
Faculty of Science and Technology,
Tokyo University of Science,
2641 Yamazaki, Noda,
Chiba 278-8510, Japan}
\email{ono\_hajime@ma.noda.tus.ac.jp}
\date{}
\begin{document}

\maketitle

\begin{abstract}
Let $\Delta\subset \mathbb{R}^n$ be an $n$-dimensional Delzant polytope.
It is well-known that there exist the $n$-dimensional compact toric
manifold $X_\Delta$ and the very ample
$(\mathbb{C}^\times)^n$-equivariant line bundle $L_\Delta$ on $X_\Delta$
associated with $\Delta$.
In the present paper, we show that if $(X_\Delta,L_\Delta^i)$ is Chow
semistable then the sum of integer points in $i\Delta$ is the constant multiple
of the barycenter of $\Delta$.
Using this result we get a necessary condition for the polarized toric
manifold $(X_\Delta,L_\Delta)$ being asymptotically Chow semistable.
Moreover we can generalize the result in \cite{fos}
to the case when $X_\Delta$ is not necessarily Fano.
\end{abstract}

\section{Introduction}

%%%%%%%%%%% Chow semistability

Let $X$ be a compact complex variety and $L$ an ample line bundle on $X$.
We call the pair $(X,L)$ a polarized variety.
When we study the moduli space of polarized varieties
it is important to consider stability of $(X,L)$
in the sense of geometric invariant
theory, see, for example, \cite{m}, \cite{v}. In this paper, we deal with
Chow stability of polarized varieties.

%%%%%%%%%%% Results of Donaldson, Mabuchi, Futaki, FOS and OSY

The concept of Chow stability is also significant for K\"ahler geometry:
Let $(X,L)$ be an $n$-dimensional polarized manifold. The
one of the main subjects in
K\"ahler geometry is the existence problem of K\"ahler metrics with constant
scalar curvature in the first Chern class $c_1(L)$ of $L$. In \cite{d}
Donaldson proved that if a polarized manifold $(X,L)$ admits a constant
scalar curvature K\"ahler metric (cscK metric for short) in $c_1(L)$
and if the automorphism group $\text{Aut}(X,L)$ of $(X,L)$ is discrete then
$(X,L)$ is asymptotically Chow stable. This result was extended by Mabuchi
\cite{m2} when $\text{Aut}(X,L)$ is not discrete. Namely, Mabuchi proved that
if the obstruction introduced in \cite{m1} vanishes and $(X,L)$ admits a
cscK metric in $c_1(L)$ then $(M,L)$ is asymptotically Chow polystable.
The obstruction introduced in \cite{m1} is an obstruction for $(X,L)$
to be asymptotically Chow semistable. This obstruction was reformulated by
Futaki in \cite{f} to the vanishing of a collection of integral
invariants $\mathcal F_{\text{Td}^{(1)}},\dots ,
\mathcal F_{\text{Td}^{(n)}}$.
Futaki, Sano and the author \cite{fos} also reformulated
the Mabuchi's obstruction as the vanishing of the derivation of the
Hilbert series when $(X,L)$ is a toric Fano manifold with the
anticanonical polarization.
We can compute the
Futaki's integral invariants and the derivation of the Hilbert series
for some toric Fano manifolds.
Especially, Sano, Yotsutani and the author \cite{osy}
proved the following.

\begin{thm}[\cite{osy}]\label{thm1.1}
There exists a $7$-dimensional toric K\"ahler-Einstein manifold $X$
such that $(X,-K_X)$ is asymptotically Chow unstable.
\end{thm}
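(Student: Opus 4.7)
The plan is to construct an explicit $7$-dimensional toric Fano manifold which is K\"ahler-Einstein yet fails the criterion for asymptotic Chow semistability reformulated in \cite{fos}. The strategy breaks into three steps: (i) characterize candidate polytopes via the two combinatorial criteria at play; (ii) restrict to a tractable family of reflexive polytopes where both invariants can be computed; (iii) exhibit a specific example and verify the two conditions numerically.

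First I would recall the polytope dictionary. A toric Fano manifold $X_\Delta$ with anticanonical polarization $-K_{X_\Delta}$ corresponds to a reflexive Delzant polytope $\Delta\subset\mathbb R^n$ (after translation, containing the origin as the unique interior lattice point of its dual). By the theorem of Wang--Zhu, such an $X_\Delta$ admits a K\"ahler-Einstein metric if and only if the barycenter of $\Delta$ coincides with the origin, i.e.\ the Futaki invariant vanishes. On the other hand, by the reformulation of Mabuchi's obstruction in \cite{fos}, asymptotic Chow semistability of $(X_\Delta,-K_{X_\Delta})$ forces the vanishing of a certain derivation of the Hilbert series of the associated polytope. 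So the task is reduced to finding a reflexive Delzant polytope $\Delta\subset\mathbb R^7$ whose barycenter is the origin but for which the Hilbert-series derivation invariant of \cite{fos} is nonzero.

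Second, to make the search feasible I would restrict to polytopes with a large symmetry group, because symmetry forces the barycenter to lie at a fixed point (often the origin) automatically, and drastically reduces the number of independent components of the Hilbert-series invariant that one needs to compute. A natural testing ground is the list of $7$-dimensional smooth reflexive polytopes obtained from bundle or toric blow-up constructions over low-dimensional Fano bases; dimension $7$ should be where the first counterexample appears, since in low dimensions classification results rule it out. I would iterate through such a list and for each candidate: (a) check the Delzant and reflexivity conditions and compute the barycenter to confirm the K\"ahler-Einstein condition; (b) expand the Hilbert series $\sum_{m\ge 0}\#(m\Delta\cap\mathbb Z^n)\,t^m$ as a rational function, apply the derivation of \cite{fos}, and test nonvanishing.

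The main obstacle is step (iii): producing a polytope on which the K\"ahler-Einstein symmetry argument applies but the Hilbert-series invariant is genuinely nonzero. A priori the two conditions could coincide; the content of the theorem is that in dimension $7$ they do not. I expect this to require a clever choice of bundle construction (for instance an iterated projective bundle) whose group of equivariant symmetries is large enough to force the barycenter to the origin, but leaves enough asymmetry in the higher-order lattice-point data for the Hilbert-series derivation to survive. Once such a $\Delta$ is identified, the remainder of the proof is a finite, verifiable computation, which can be carried out by hand or by computer algebra, and the resulting $X_\Delta$ provides the manifold claimed in the statement.
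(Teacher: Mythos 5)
Your reduction is the same as the paper's: combine the Wang--Zhu criterion (K\"ahler--Einstein $\Leftrightarrow$ barycenter of the reflexive polytope at the origin) with the reformulation in \cite{fos} of Mabuchi's obstruction (asymptotic Chow semistability $\Rightarrow$ vanishing of the derivative of the Hilbert series), and then exhibit a $7$-dimensional reflexive Delzant polytope satisfying the first condition but not the second. However, there are two problems with your proposal. The first is that for an existence theorem the explicit example \emph{is} the proof, and you never produce one; everything before that point is a restatement of what needs to be found. The paper (following \cite{osy}) takes the Nill--Paffenholz polytope $\Delta_{NP}\subset\mathbb{R}^7$, cut out by the twelve explicit facet normals ${\bf v}_1,\dots,{\bf v}_{12}$ listed in Section 3, and verifies by direct computation that $\int_{\Delta_{NP}}{\bf x}\,dv={\bf 0}$ while the derivative of the Hilbert series at $(1,\dots,1,t)$ is nonzero.

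The second, more substantive problem is that your search heuristic is self-defeating. You propose to restrict to polytopes with a large symmetry group so that the barycenter is forced to the origin, hoping that ``enough asymmetry in the higher-order lattice-point data'' survives. But both invariants in play --- the barycenter $\int_\Delta{\bf x}\,dv$ and the lattice-point sums ${\bf s}_\Delta(i)=\sum_{{\bf a}\in i\Delta\cap\mathbb{Z}^n}{\bf a}$ (equivalently the Hilbert-series derivative, via \eqref{e1.9}) --- are $\mathbb{R}^n$-valued and equivariant under the group of lattice automorphisms of $\Delta$. Any symmetry argument that pins the barycenter to the origin (i.e.\ any subgroup of $GL(n,\mathbb{Z})$ preserving $\Delta$ whose only fixed vector is ${\bf 0}$) pins every ${\bf s}_\Delta(i)$ to the origin by exactly the same argument, so no symmetric polytope can serve as a counterexample. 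This is precisely why the relevant example comes from Nill and Paffenholz's work on \emph{non-symmetric} K\"ahler--Einstein toric Fano manifolds: the point is a polytope whose barycenter vanishes without a symmetry forcing it, and finding such a polytope is the genuinely hard step that your proposal leaves open.
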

Therefore, different from Donaldson's result in \cite{d},
the existence of cscK metric in $c_1(L)$ does not imply
asymptotic Chow semistability of a polarized manifold $(X,L)$
when the automorphism group $\text{Aut}(X,L)$ is not discrete.

%%%%%%%%%%% Main result of this paper and its corollaries

In the present paper, we give an obstruction for Chow semistability
of polarized toric manifolds from a different viewpoint.
Let $\Delta\subset \mathbb{R}^n$ be an $n$-dimensional integral
Delzant polytope.
Namely, $\Delta$ satisfies the following conditions
(in \cite{o} a polytope satisfying these conditions are called
absolutely simple)
:
\begin{enumerate}
	\item The vertices ${\bf w}_1,\dots,{\bf w}_d$ of $\Delta$ are contained
	in $\mathbb{Z}^n$.
	\item For each vertex ${\bf w}_l$, there are $n$ edges $e_{l,1},
	\dots,e_{l,n}$ of $\Delta$ emanating from ${\bf w}_l$.
	\item The primitive vectors with respect to the edges $e_{l,1},
	\dots,e_{l,n}$ generate the lattice $\mathbb{Z}^n$ over $\mathbb{Z}$.
\end{enumerate}

It is well-known that $n$-dimensional integral Delzant polytopes
correspond to $n$-dimensional compact toric manifolds with 
$(\mathbb{C}^\times)^n$-equivariant very ample line bundles.
The reader is referred to \cite{o} for example.
Hence we have the $n$-dimensional polarized toric manifold $(X_\Delta,
L_\Delta)$ associated with $\Delta$.
The main result in this paper is the following.

\begin{thm}\label{thm1.2}
Let $\Delta\subset \mathbb{R}^n$ be an $n$-dimensional integral Delzant
polytope. If $(X_\Delta,L_\Delta^i)$ is Chow semistable for a positive
integer $i$ then
we have
\begin{equation}\label{e1.1}
\sum_{{\bf a}\in i\Delta\cap \mathbb{Z}^n}{\bf a}
=\frac{i\{\#(i\Delta\cap \mathbb{Z}^n)\}}{\vol(\Delta)}
\int_\Delta {\bf x}\, dv,
\end{equation}
where $dv$ is the Euclidean
volume form on $\mathbb{R}^n$. 
\end{thm}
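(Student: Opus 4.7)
The plan is to apply the Hilbert-Mumford numerical criterion for Chow semistability to one-parameter subgroups coming from the acting torus. Write $N_i := \#(i\Delta \cap \mathbb{Z}^n)$ for $\dim H^0(X_\Delta, L_\Delta^i)$ and $A_i := \sum_{\mathbf{a} \in i\Delta \cap \mathbb{Z}^n} \mathbf{a}$. The embedding $X_\Delta \hookrightarrow \mathbb{P}^{N_i-1}$ associated with $|L_\Delta^i|$ supplies a weight basis of sections indexed by lattice points, and each $\xi \in \mathbb{Z}^n$ yields a one-parameter subgroup $\lambda_\xi \colon \mathbb{C}^\times \to T \subset GL(N_i)$ carrying weight $\langle \xi, \mathbf{a}\rangle$ on the basis vector $s_{\mathbf{a}}$. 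Subtracting the average weight $\bar w_\xi := \langle \xi, A_i\rangle/N_i$ produces its $SL(N_i)$-projection $\lambda_\xi^{\mathrm{SL}}$, to which the Hilbert-Mumford criterion applies.

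By Mumford's asymptotic identification, the Chow weight $e_{\mathrm{Chow}}(\lambda_\xi^{\mathrm{SL}})$ is a fixed nonzero multiple of the leading coefficient (in $k$) of the weight of $\lambda_\xi^{\mathrm{SL}}$ on $\det H^0(X_\Delta, L_\Delta^{ik})$. Under the torus action this weight is precisely
\[
\tilde w(k) \;=\; \sum_{\mathbf{a} \in ik\Delta \cap \mathbb{Z}^n}\langle \xi, \mathbf{a}\rangle \;-\; k\bar w_\xi \cdot \#(ik\Delta \cap \mathbb{Z}^n),
\]
the second term reflecting that the shift by $-\bar w_\xi$ on $L_\Delta^i$ becomes a shift by $-k\bar w_\xi$ on $L_\Delta^{ik}$. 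Standard Riemann-sum / Ehrhart estimates give
\[
\sum_{\mathbf{a} \in ik\Delta \cap \mathbb{Z}^n}\langle \xi, \mathbf{a}\rangle = (ik)^{n+1}\!\int_\Delta \langle \xi, \mathbf{x}\rangle\,dv + O(k^n), \qquad \#(ik\Delta\cap\mathbb{Z}^n) = (ik)^n \vol(\Delta) + O(k^{n-1}),
\]
so the coefficient of $k^{n+1}$ in $\tilde w(k)$ equals $i^{n+1}\int_\Delta \langle \xi, \mathbf{x}\rangle\,dv - i^n\bar w_\xi\,\vol(\Delta)$.

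Since $e_{\mathrm{Chow}}(\lambda_\xi^{\mathrm{SL}})$ is linear in $\xi$, Chow semistability applied to both $\lambda_\xi^{\mathrm{SL}}$ and $\lambda_{-\xi}^{\mathrm{SL}}$ forces it to vanish for every $\xi \in \mathbb{Z}^n$, and hence the above leading coefficient vanishes:
\[
i\!\int_\Delta \langle \xi, \mathbf{x}\rangle\,dv \;=\; \bar w_\xi\,\vol(\Delta) \;=\; \frac{\langle \xi, A_i\rangle}{N_i}\vol(\Delta)\qquad \text{for every } \xi \in \mathbb{Z}^n.
\]
Multiplying out produces $\langle \xi, A_i\rangle = (iN_i/\vol(\Delta))\int_\Delta \langle \xi, \mathbf{x}\rangle\,dv$ for every $\xi$, which, because $\xi$ is arbitrary, is exactly the identity \eqref{e1.1}. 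The main obstacle in this program is the precise version of Mumford's formula relating the Chow weight of an $SL$ 1-PS to the leading asymptotics of the weight polynomial on $H^0(X_\Delta, L_\Delta^{ik})$: one must both pin down the normalization constant and verify that the $SL$-shift propagates linearly in $k$ through the tensor powers $L_\Delta^{ik}$ as the $-k\bar w_\xi\cdot h(k)$ correction above. Once that is settled, the toric Ehrhart asymptotics make every quantity explicit and the identity falls out immediately.
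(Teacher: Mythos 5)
Your proposal is correct in outline, but it proves the theorem by a genuinely different route than the paper. You work with torus one-parameter subgroups $\lambda_\xi$, invoke Mumford's asymptotic formula identifying the Chow weight with the leading coefficient of the weight polynomial $\tilde w(k)$ on $H^0(X_\Delta,L_\Delta^{ik})$, extract that coefficient from Ehrhart asymptotics, and then use the $\pm\xi$ symmetry to upgrade the semistability inequality to the equality \eqref{e1.1}; this is the Donaldson-style computation of the Chow weight of a toric degeneration, and the arithmetic checks out (the additive shift by $-k\bar w_\xi$ on the tensor power is handled correctly, and the constant-shift ambiguity cancels against the $\deg(X)/(N+1)$ term in Mumford's criterion, so the sign convention you worry about does not actually affect the conclusion). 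The paper instead stays at a fixed exponent $i$ and never takes a $k\to\infty$ limit: it reduces to the diagonal maximal torus of $SL(N+1,\mathbb{C})$ via the Hilbert--Mumford criterion (Proposition \ref{prop2.1}), observes that semistability for that torus forces the weight polytope of the Chow form to meet the diagonal line $\mathbb{R}(1,\dots,1)$ (Propositions \ref{prop2.3}, \ref{prop2.4}, \ref{prop3.2}), and then quotes the Gelfand--Kapranov--Zelevinsky/Kapranov--Sturmfels--Zelevinsky description of the affine hull of the Chow polytope of a toric variety (Proposition \ref{prop3.3}); solving the two resulting linear equations for $t$ gives \eqref{e1.1} immediately. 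What the paper's route buys is precisely the step you flag as your ``main obstacle'': by citing the GKZ/KSZ structure theorem it entirely avoids pinning down the normalization in Mumford's weight formula and the passage to $k\to\infty$. What your route buys is independence from the Chow-polytope machinery, at the cost of having to cite (or reprove) Mumford's Theorem 2.9 carefully; as written that citation is the one missing ingredient, though it is standard, so I would call this a complete alternative proof modulo a reference rather than a gap in the mathematics. One small point to clean up: $\bar w_\xi$ is generally not an integer, so $\lambda_\xi^{\mathrm{SL}}$ is only a rational one-parameter subgroup; replace $\xi$ by $N_i\xi$ (or argue with the weight polytope) to make it an honest one-parameter subgroup of $SL(N_i,\mathbb{C})$.
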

\begin{rem}
It is easy to see that the following two conditions are equivalent.
\begin{itemize}
	\item The equality \eqref{e1.1} holds for some integral Delzant
	polytope $\Delta$.
	\item The equality \eqref{e1.1} holds for any
	$\mathbb{Z}^n$-translation of $\Delta$.
\end{itemize}
\end{rem}

Note here that for any $n$-dimensional integral polytope $P\subset
\mathbb{R}^n$, the number and the sum
of the integer points in $iP$
are well-behaved: There exists the polynomial $E_P(t)$ of degree $n$,
so called Ehrhart polynomial of $P$, such that
\begin{equation}\label{e1.2}
E_P(t)=\vol (P)t^n+\sum_{j=0}^{n-1}E_{P,j}t^j,\ \ E_P(i)=\#(iP\cap \mathbb{Z}
^n).
\end{equation}
Similarly it is known that there exists the $\mathbb{R}^n$-valued polynomial
${\bf s}_P(t)$ such that
\begin{equation}\label{e1.3}
{\bf s}_P(t)=t^{n+1}\int_P{\bf x}dv+\sum_{j=1}^nt^j{\bf s}_{P,j},\ \ 
{\bf s}_P(i)=\sum_{{\bf a}\in iP\cap \mathbb{Z}^n}{\bf a},
\end{equation}
see \cite{n}. Therefore by Theorem \ref{thm1.2} we easily see the following.

\begin{thm}\label{thm1.3}
Let $\Delta\subset \mathbb{R}^n$ be an $n$-dimensional integral Delzant
polytope. If $(X_\Delta,L_\Delta)$ is asymptotically Chow semistable
then \eqref{e1.1} holds for any positive integer $i$.
If \eqref{e1.1} does not hold for a positive integer $i_0$,
then there exists a positive integer $i_1$ such that
$(X_\Delta,L_\Delta^i)$ is Chow unstable for any $i\ge i_1.$
\end{thm}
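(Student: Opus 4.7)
The plan is to treat both sides of \eqref{e1.1} as $\mathbb{R}^n$-valued polynomials in $i$ and then reduce everything to Theorem \ref{thm1.2} together with the elementary fact that a non-zero polynomial has only finitely many integer roots. Using \eqref{e1.2} and \eqref{e1.3}, I would introduce
\[
P(t):={\bf s}_\Delta(t)-\frac{t\,E_\Delta(t)}{\vol(\Delta)}\int_\Delta {\bf x}\,dv,
\]
which is a genuine polynomial in $t$ because both ${\bf s}_\Delta(t)$ and $E_\Delta(t)$ are. Then \eqref{e1.1} at a positive integer $i$ is equivalent to $P(i)=0$. (A comparison of leading terms even shows $\deg P\le n$, but this is not needed.)

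For the first statement, asymptotic Chow semistability of $(X_\Delta,L_\Delta)$ is by definition the property that $(X_\Delta,L_\Delta^i)$ is Chow semistable for every sufficiently large positive integer $i$. Theorem \ref{thm1.2} then gives $P(i)=0$ for all such $i$, so $P$ has infinitely many integer zeros and hence vanishes identically. In particular, \eqref{e1.1} holds for every $i\ge 1$.

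For the second statement, suppose that \eqref{e1.1} fails at some positive integer $i_0$. Then $P(i_0)\neq 0$, so $P$ is a non-zero polynomial with only finitely many integer roots; choose $i_1$ strictly larger than all of them. For every $i\ge i_1$ one has $P(i)\neq 0$, i.e.\ \eqref{e1.1} fails at $i$, and the contrapositive of Theorem \ref{thm1.2} forces $(X_\Delta,L_\Delta^i)$ to be Chow unstable. There is no serious obstacle here: the entire argument is just the observation that \eqref{e1.1} is a polynomial identity in $i$, so its validity (or failure) at infinitely many values propagates to all values.
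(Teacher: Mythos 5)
Your proposal is correct and follows exactly the route the paper intends: the paper states the polynomiality of $E_\Delta(t)$ and ${\bf s}_\Delta(t)$ in \eqref{e1.2}--\eqref{e1.3} and then deduces Theorem \ref{thm1.3} from Theorem \ref{thm1.2} with the remark that it is ``easily seen,'' which is precisely your argument that a polynomial identity holding at infinitely many integers holds identically, and a non-zero polynomial has only finitely many integer roots.
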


We can rewrite the equality \eqref{e1.1} as
\begin{equation}\label{e1.4}
\vol(\Delta){\bf s}_\Delta(i)-iE_\Delta(i)\int_\Delta
{\bf x}dv=\sum_{j=1}^ni^j\left\{\vol(\Delta){\bf s}_{\Delta,j}-E_{\Delta,j-1}
\int_\Delta {\bf x}dv\right\}={\bf 0}.
\end{equation}

Hence we have the following obstructions for asymptotic Chow semistability of
$(X_\Delta,L_\Delta)$.

\begin{cor}
Let $\Delta\subset \mathbb{R}^n$ be an $n$-dimensional integral Delzant
polytope. If $(X_\Delta,L_\Delta)$ is asymptotically Chow semistable
then 
\begin{equation}\label{e1.5}
\mathcal F_{\Delta,j}:=\vol(\Delta){\bf s}_{\Delta,j}-E_{\Delta,j-1}\int_\Delta
{\bf x}dv\in \mathbb{R}^n
\end{equation}
vanishes for each $j=1,\dots,n$.
\end{cor}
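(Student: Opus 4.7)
The plan is to derive the corollary directly from Theorem~\ref{thm1.3} together with the polynomial identities~\eqref{e1.2} and~\eqref{e1.3}; there is essentially no new analytic or geometric input required.

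First I would invoke Theorem~\ref{thm1.3}: under the hypothesis that $(X_\Delta,L_\Delta)$ is asymptotically Chow semistable, the identity \eqref{e1.1} holds for \emph{every} positive integer $i$. Substituting $\#(i\Delta\cap\mathbb{Z}^n)=E_\Delta(i)$ and $\sum_{{\bf a}\in i\Delta\cap\mathbb{Z}^n}{\bf a}={\bf s}_\Delta(i)$, and clearing denominators by $\vol(\Delta)$, the equality \eqref{e1.1} becomes
\begin{equation*}
\vol(\Delta){\bf s}_\Delta(i)-iE_\Delta(i)\int_\Delta {\bf x}\,dv={\bf 0}
\end{equation*}
for every positive integer $i$. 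I would then expand the left-hand side using the explicit polynomial expressions \eqref{e1.2} and \eqref{e1.3}: the leading terms $t^{n+1}\vol(\Delta)\int_\Delta{\bf x}\,dv$ in $\vol(\Delta){\bf s}_\Delta(t)$ and $t^{n+1}\vol(\Delta)\int_\Delta{\bf x}\,dv$ in $tE_\Delta(t)\int_\Delta{\bf x}\,dv$ cancel, and the constant terms vanish because ${\bf s}_\Delta(t)$ has no constant term and $iE_\Delta(i)$ contributes $i\cdot E_{P,0}$ only at order $i^1$ and higher. Collecting coefficients of $i^j$ for $j=1,\dots,n$ yields precisely the rewriting recorded in \eqref{e1.4}, where the coefficient of $i^j$ is $\mathcal{F}_{\Delta,j}=\vol(\Delta){\bf s}_{\Delta,j}-E_{\Delta,j-1}\int_\Delta{\bf x}\,dv$.

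Finally, since this $\mathbb{R}^n$-valued polynomial in $i$ vanishes for all positive integers $i$, it is the zero polynomial, and hence each coefficient $\mathcal{F}_{\Delta,j}\in\mathbb{R}^n$ must vanish for $j=1,\dots,n$. The only step that requires any care is the bookkeeping in the polynomial rearrangement leading to \eqref{e1.4}; there is no real obstacle, since the corollary is essentially the observation that the identity of Theorem~\ref{thm1.3} is a \emph{polynomial} identity in $i$, so semistability forces each of its coefficients to vanish separately.
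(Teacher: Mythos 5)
Your proposal is correct and follows exactly the paper's own (implicit) argument: Theorem \ref{thm1.3} gives \eqref{e1.1} for all $i$, the rewriting \eqref{e1.4} identifies the left-hand side as a polynomial in $i$ with coefficients $\mathcal F_{\Delta,j}$, and a polynomial vanishing at all positive integers has all coefficients zero. Nothing is missing.
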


On the one hand, these vectors $\mathcal F_{\Delta,j}$ are regarded as
characters on the Lie algebra of the $n$-dimensional torus. On the other hand,
Futaki's integral invariants $\mathcal F_{\text{Td}^{(p)}},\ p=1,\dots,n$
are characters on the Lie algebra of holomorphic vector fields on $X$,
\cite{f}.

\begin{conj}\label{conj1.5}
\begin{equation}\label{e1.6}
\textup{Lin}_{\mathbb{C}}\{\mathcal F_{\Delta,j},j=1,\dots,n\}=
\textup{Lin}_{\mathbb{C}}\{\mathcal F_{\textup{Td}^{(p)}}|
_{\mathbb{C}^n},p=1,\dots,n\}
\subset \mathbb{C}^n,
\end{equation}
where $\textup{Lin}_{\mathbb{C}}$ stands for the linear hull in
$\mathbb{C}^n$.
\end{conj}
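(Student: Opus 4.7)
The plan is to compute both sides of \eqref{e1.6} as explicit functions of the polytope $\Delta$ and then compare. The natural unifying tool is torus-equivariant Hirzebruch-Riemann-Roch applied to $L_\Delta^i$ on $X_\Delta$. For $\xi\in\mathbb{C}^n$ identified with the torus Lie algebra, the $T$-equivariant Euler characteristic satisfies
\[
\chi^T(X_\Delta,L_\Delta^i)(\xi)=\sum_{\mathbf{a}\in i\Delta\cap\mathbb{Z}^n}e^{\langle\mathbf{a},\xi\rangle}=\int_{X_\Delta}e^{ic_1^T(L_\Delta)(\xi)}\textup{Td}^T(X_\Delta)(\xi).
\]
Evaluating at $\xi=0$ gives the Ehrhart polynomial $E_\Delta(i)$, while differentiating in $\xi$ at $0$ gives the vector $\mathbf{s}_\Delta(i)$. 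On the geometric side, the same two operations produce, respectively, the coefficients appearing in non-equivariant Hirzebruch-Riemann-Roch (expressible via the Todd class) and $\mathbb{C}^n$-valued characters that, after expansion in $i$, can be read off in terms of the restricted Futaki invariants $\mathcal{F}_{\textup{Td}^{(p)}}|_{\mathbb{C}^n}$.

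Matching the two expansions coefficient by coefficient in $i$ then yields a linear identity which, after multiplying by $\textup{Vol}(\Delta)$ and subtracting $iE_\Delta(i)\int_\Delta\mathbf{x}\,dv$ as in \eqref{e1.4}, produces each $\mathcal{F}_{\Delta,j}$ as an explicit $\mathbb{C}$-linear combination of the vectors $\mathcal{F}_{\textup{Td}^{(p)}}|_{\mathbb{C}^n}$. The volume and barycenter contributions that would otherwise survive are killed exactly by the two subtracted terms in the definition of $\mathcal{F}_{\Delta,j}$, so the remaining terms are of the correct Futaki type. This establishes the inclusion $\subseteq$ in \eqref{e1.6}.

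For the reverse inclusion $\supseteq$, one has to invert the $n\times n$ transition matrix obtained above. A promising route is to use equivariant localization on $X_\Delta$: each $\mathcal{F}_{\textup{Td}^{(p)}}|_{\mathbb{C}^n}$ becomes a combinatorial sum over the vertices of $\Delta$ weighted by Todd contributions built from the primitive edge vectors, and this vertex sum should be related to an Euler-Maclaurin-type expansion of $\sum_{\mathbf{a}\in i\Delta\cap\mathbb{Z}^n}\mathbf{a}$ in which the coefficients $\mathbf{s}_{\Delta,j}$ and $E_{\Delta,j-1}$ appear naturally, producing a triangular system. The main obstacle I anticipate is making this triangularity precise and verifying that the diagonal entries are nonzero: the classical Futaki case (essentially the barycenter obstruction corresponding to the top-degree coefficient $\mathcal{F}_{\Delta,n}$ and to $\mathcal{F}_{\textup{Td}^{(1)}}$) serves as a base case, but matching the higher Todd invariants with the subleading Ehrhart data requires a careful vertex-by-vertex computation, and this is the technical heart where I expect the argument to be most delicate.
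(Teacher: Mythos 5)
The statement you are addressing is Conjecture 1.5 of the paper: the author does not prove it. The only evidence offered in the paper is the single example of the one-point blow-up of $\mathbb{C}P^2$ (the $k=2$ Hirzebruch case), where both sides of \eqref{e1.6} are computed explicitly and found to be the line in $\mathbb{C}^2$ spanned by $(1,-2)$. So there is no proof in the paper to compare against, and your proposal has to be judged as an attempt at an open problem.

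As such, it is a reasonable research program but not a proof. The equivariant Riemann--Roch identity you start from is correct, and differentiating it at $\xi=0$ does express $\mathbf{s}_\Delta(i)$ as a polynomial in $i$ whose coefficients are integrals of equivariant characteristic classes; this is essentially how the invariants $\mathcal F_{\textup{Td}^{(p)}}$ arise in \cite{f} and how \cite{fos} relates them to the Hilbert series in the reflexive case. But both decisive steps are asserted rather than carried out. For the inclusion $\subseteq$ you claim that, after subtracting $iE_\Delta(i)\int_\Delta\mathbf{x}\,dv$, the surviving coefficients are ``of the correct Futaki type''; this requires an actual identification of the degree-$j$ equivariant Todd contribution, restricted to the torus directions, with a specific linear combination of the $\mathcal F_{\textup{Td}^{(p)}}|_{\mathbb{C}^n}$. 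That identification is the nontrivial content: the $\mathcal F_{\textup{Td}^{(p)}}$ are defined by integrals of $\textup{Td}^{(p)}$ against powers of a fixed K\"ahler class, not directly as coefficients of the equivariant Euler characteristic, so a genuine computation (or a precise citation from \cite{f} or \cite{fos}) is needed. For the inclusion $\supseteq$ you yourself flag that the triangularity of the transition matrix and the nonvanishing of its diagonal entries are unverified; that is exactly where the conjecture lives, and without it you have at best one inclusion, and in fact neither is established. Nothing in the proposal goes beyond the heuristic picture that presumably motivated the author to state \eqref{e1.6} as a conjecture, and the statement remains open.
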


We next consider the special case.
An $n$-dimensional integral polytope $P\subset \mathbb{R}^n$ is called
reflexive if $P$ satisfies the following conditions:

\begin{enumerate}
	\item For each codimension $1$ face $F\subset P$, there is an
	${\bf n}_F\in \mathbb{Z}^n$ with $F=\{{\bf x}\in P\,|\,
	\langle {\bf x},{\bf n}_F\rangle=1\}$,
    \item The origin ${\bf 0}\in \mathbb{R}^n$ is contained in the interior
    of $P$.
\end{enumerate}

It is well-known that reflexive
Delzant polytopes correspond to toric Fano manifolds with
the anticanonical polarization. 

\begin{cor}\label{cor1.6}
Let $\Delta\subset \mathbb{R}^n$ be an $n$-dimensional reflexive Delzant
polytope. Then the following conditions are equivalent.

(1) $(X_\Delta,L_\Delta)$ is asymptotically Chow semistable.

(2) For all positive integer $i$, the equality
\begin{equation}\label{e1.7}
{\bf s}_\Delta(i)=\frac{iE_\Delta(i)}{\vol (\Delta)}\int_\Delta
{\bf x}dv={\bf 0}
\end{equation}
holds.
\end{cor}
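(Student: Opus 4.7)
The strategy is to exploit that reflexivity identifies $(X_\Delta, L_\Delta)$ with the toric Fano manifold $X_\Delta$ under its anticanonical polarization, so that Mabuchi's obstruction admits the Hilbert-series reformulation of \cite{fos}. The two implications are then read off from Theorem \ref{thm1.3} together with (the necessary, respectively converse, direction of) \cite{fos}.

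For the direction (1) $\Rightarrow$ (2), I would first apply Theorem \ref{thm1.3} to conclude that \eqref{e1.1} holds for every positive integer $i$. To upgrade this to \eqref{e1.7} it remains to establish $\int_\Delta {\bf x}\, dv = {\bf 0}$. In the reflexive case this integral is a nonzero scalar multiple of the classical Futaki character $\mathcal{F}_{\text{Td}^{(1)}}$ of $(X_\Delta, -K_{X_\Delta})$ restricted to the torus Lie algebra, and the vanishing of $\mathcal{F}_{\text{Td}^{(1)}}$ is a component of Mabuchi's obstruction, which is necessary for asymptotic Chow semistability. Substituting $\int_\Delta {\bf x}\, dv = {\bf 0}$ back into \eqref{e1.1} makes both expressions in \eqref{e1.7} equal to ${\bf 0}$.

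For (2) $\Rightarrow$ (1), I would reinterpret (2) through \eqref{e1.4}: combining ${\bf s}_\Delta(i) = {\bf 0}$ for every $i \ge 1$ with $\int_\Delta {\bf x}\, dv = {\bf 0}$ forces each vector coefficient ${\bf s}_{\Delta,j}$, and hence each $\mathcal{F}_{\Delta,j}$ of \eqref{e1.5}, to vanish. Via the toric dictionary these quantities assemble, in the toric Fano case, into the derivation of the Hilbert series of $(X_\Delta, -K_{X_\Delta})$ that \cite{fos} identifies with Mabuchi's obstruction. Invoking the characterization in \cite{fos} for the reflexive/anticanonical setting then yields asymptotic Chow semistability of $(X_\Delta, L_\Delta)$.

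The main obstacle I anticipate is this second direction. On the technical side, one must match the $n$ vector coefficients ${\bf s}_{\Delta,j}$ against the precise components of the Hilbert-series derivation of \cite{fos}, which requires careful coefficient bookkeeping via the Ehrhart polynomial expansions \eqref{e1.2}--\eqref{e1.3}. More conceptually, one must use \cite{fos} as a converse, passing from vanishing of Mabuchi's obstruction back to asymptotic Chow semistability; this step goes beyond the merely necessary character of the obstruction in general, and is what restricts the equivalence to the reflexive/Fano setting rather than holding for arbitrary integral Delzant polytopes.
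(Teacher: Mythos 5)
Your direction (1) $\Rightarrow$ (2) is essentially the paper's argument: asymptotic Chow semistability forces the classical Futaki invariant, which for the anticanonical polarization of a toric Fano is a multiple of $\int_\Delta {\bf x}\,dv$, to vanish (the paper cites \cite{f} for this), and then Theorem \ref{thm1.3} turns \eqref{e1.1} into \eqref{e1.7}. That half is fine.

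The direction (2) $\Rightarrow$ (1) has a genuine gap, and it is exactly the one you flag at the end without resolving. The result of \cite{fos} is only a reformulation of Mabuchi's \emph{obstruction}: vanishing of the derivative of the Hilbert series is equivalent to the vanishing of the integral invariants $\mathcal F_{\textup{Td}^{(p)}}$, which is a \emph{necessary} condition for asymptotic Chow semistability. There is no ``characterization'' in \cite{fos} that lets you run this backwards on its own; hoping that the reflexive setting upgrades the obstruction to a sufficient condition is not an argument. The paper closes the implication with two additional inputs that your proposal never invokes. First, since $\Delta$ is reflexive and $\int_\Delta {\bf x}\,dv={\bf 0}$, the theorem of Wang and Zhu \cite{wz} produces a K\"ahler--Einstein metric on $X_\Delta$, i.e.\ a cscK metric in $c_1(-K_{X_\Delta})$. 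Second, Mabuchi's theorem \cite{m2} states that a polarized manifold admitting a cscK metric in $c_1(L)$ \emph{and} having vanishing obstruction is asymptotically Chow polystable, hence semistable. It is the conjunction of the existence theorem (which is where reflexivity and the barycenter condition are really used) with Mabuchi's sufficiency result that makes the converse work; the coefficient bookkeeping between the ${\bf s}_{\Delta,j}$ and the Hilbert-series derivative, which you single out as the main technical issue, is comparatively routine via \eqref{e1.9}.
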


We next observe the relation between asymptotic Chow semistability
of $(X_\Delta,L_\Delta)$ and the derivative of the Hilbert series.
Let $\Delta$ be an $n$-dimensional integral Delzant polytope
and ${\bf w}_1,\dots,{\bf w}_d\in \mathbb{Z}^n$ the vertices of $\Delta$.
We put
$$\mathcal C(\Delta):=\{r_1({\bf w}_1,1)+\cdots+r_d({\bf w}_d,1)
\in \mathbb{R}^{n+1}\,|\, r_1,\dots,r_d\ge 0\}$$
and
\begin{equation}\label{e1.8}
C_\Delta(x_1,\dots,x_{n+1}) :=
\sum _{(a_1,\dots,a_{n+1})\in \mathcal C(\Delta)\cap \mathbb{Z}^{n+1}}
x_1^{a_1}\cdots x_{n+1}^{a_{n+1}}
=:
\sum_{{\bf a}\in \mathcal C(\Delta)\cap \mathbb{Z}^{n+1}}
{\bf x}^{\bf a}.
\end{equation}
We call $C_\Delta$ the Hilbert series of $\Delta$. Since
\begin{equation}\label{e1.9}
\begin{pmatrix}
\frac{\partial C_\Delta}{\partial x_1}(1,\dots,1,t)\\
\vdots\\
\frac{\partial C_\Delta}{\partial x_n}(1,\dots,1,t)\\
\end{pmatrix}
=\sum_{i=1}^\infty {\bf s}_\Delta(i)t^i
\end{equation}
holds,
the derivative of the Hilbert series at $(1,\dots,1,t)$ can be regarded as
the generating function of ${\bf s}_\Delta(i)$.
By \eqref{e1.9} and Theorem  \ref{thm1.2}, we see the following.

\begin{cor}\label{cor1.7}

If $(X_\Delta,L_\Delta)$ is asymptotically Chow semistable then we have
\begin{equation}\label{e1.10}
\begin{pmatrix}
\frac{\partial C_\Delta}{\partial x_1}(1,\dots,1,t)\\
\vdots\\
\frac{\partial C_\Delta}{\partial x_n}(1,\dots,1,t)\\
\end{pmatrix}
=\left(\sum_{i=1}^{\infty} i E_\Delta (i)t^i\right)
\frac{\int_\Delta {\bf x}dv}{\vol (\Delta)}.
\end{equation}
Moreover when $\Delta$ is reflexive
\begin{equation}\label{e1.11}
\begin{pmatrix}
\frac{\partial C_\Delta}{\partial x_1}(1,\dots,1,t)\\
\vdots\\
\frac{\partial C_\Delta}{\partial x_n}(1,\dots,1,t)\\
\end{pmatrix}
={\bf 0}
\end{equation}
holds.
\end{cor}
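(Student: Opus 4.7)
The plan is to derive \eqref{e1.10} as an essentially formal consequence of Theorem \ref{thm1.3} and \eqref{e1.9}, and then upgrade to \eqref{e1.11} in the reflexive case using Ehrhart--Macdonald reciprocity to force $\int_\Delta {\bf x}\,dv = {\bf 0}$.

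For the first assertion, assume $(X_\Delta,L_\Delta)$ is asymptotically Chow semistable. Theorem \ref{thm1.3} then says \eqref{e1.1} holds for every positive integer $i$, which (using $\#(i\Delta\cap\mathbb{Z}^n)=E_\Delta(i)$ and $\sum_{{\bf a}\in i\Delta\cap\mathbb{Z}^n}{\bf a}={\bf s}_\Delta(i)$) reads
$$
{\bf s}_\Delta(i)=\frac{iE_\Delta(i)}{\vol(\Delta)}\int_\Delta{\bf x}\,dv
\qquad\text{for all } i\ge 1.
$$
Multiplying by $t^i$, summing over $i\ge 1$, pulling the $i$-independent vector $\int_\Delta{\bf x}\,dv/\vol(\Delta)$ outside the sum, and substituting into the left-hand side via \eqref{e1.9} yields \eqref{e1.10} directly.

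For the reflexive case, I would observe that both ${\bf s}_\Delta(t)$ and $\frac{tE_\Delta(t)}{\vol(\Delta)}\int_\Delta{\bf x}\,dv$ are polynomials in $t$, and they agree on every positive integer, hence they coincide as polynomials. Evaluating at $t=-1$ then unlocks the Ehrhart--Macdonald reciprocity: $E_\Delta(-1)=(-1)^n E^\circ_\Delta(1)$ and ${\bf s}_\Delta(-1)=(-1)^{n+1}{\bf s}^\circ_\Delta(1)$, where $E^\circ_\Delta$ and ${\bf s}^\circ_\Delta$ count respectively the number and the sum of the \emph{interior} lattice points of dilations. By definition a reflexive polytope has the origin as its unique interior lattice point, so $E^\circ_\Delta(1)=1$ and ${\bf s}^\circ_\Delta(1)={\bf 0}$. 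Hence
$$
{\bf 0}={\bf s}_\Delta(-1)=\frac{(-1)\cdot(-1)^n}{\vol(\Delta)}\int_\Delta{\bf x}\,dv,
$$
and since $(-1)^{n+1}/\vol(\Delta)\neq 0$ this forces $\int_\Delta{\bf x}\,dv={\bf 0}$. Substituting into \eqref{e1.10} gives \eqref{e1.11}.

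The only technical subtlety, and the step most deserving the label \emph{main obstacle}, is the vector-valued reciprocity ${\bf s}_\Delta(-1)=(-1)^{n+1}{\bf s}^\circ_\Delta(1)$. This does not appear explicitly earlier in the paper, but it follows either by applying the classical Ehrhart--Macdonald reciprocity coordinate-by-coordinate to the polynomials $i\mapsto\sum_{{\bf a}\in i\Delta\cap\mathbb{Z}^n}a_k$ (each of which can be expressed through counting functions of lattice points in rational polytopes), or by a direct inclusion--exclusion argument over faces of $\Delta$. Once that is in hand, the rest of the argument is purely formal manipulation of generating functions.
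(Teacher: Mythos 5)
Your proof is correct. The derivation of \eqref{e1.10} is exactly the paper's (implicit) argument: Theorem \ref{thm1.3} gives ${\bf s}_\Delta(i)=\frac{iE_\Delta(i)}{\vol(\Delta)}\int_\Delta{\bf x}\,dv$ for every $i\ge 1$, and summing against $t^i$ and invoking \eqref{e1.9} is all there is to it. Where you genuinely diverge is the reflexive case. The paper obtains $\int_\Delta{\bf x}\,dv={\bf 0}$ from Futaki's theorem in \cite{f}: asymptotic Chow semistability forces the classical Futaki invariant to vanish, and for a toric Fano with the anticanonical polarization that invariant is represented by $\int_\Delta{\bf x}\,dv$ (this is how the proof of Corollary \ref{cor1.6} proceeds, and \eqref{e1.11} is then immediate from \eqref{e1.10}). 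You instead extract $\int_\Delta{\bf x}\,dv={\bf 0}$ purely combinatorially: since the two sides of \eqref{e1.1} are polynomials agreeing at all positive integers they agree identically, and evaluating at $t=-1$ via Ehrhart--Macdonald reciprocity together with the fact that ${\bf 0}$ is the unique interior lattice point of a reflexive polytope kills the right-hand side's coefficient. Your sign bookkeeping checks out ($E_\Delta(-1)=(-1)^n$, ${\bf s}_\Delta(-1)=(-1)^{n+1}{\bf s}^\circ_\Delta(1)={\bf 0}$), and the uniqueness of the interior lattice point does follow from the paper's definition of reflexive. The trade-off: the paper's route imports a differential-geometric input (\cite{f}) but needs no combinatorics beyond what is already stated; your route is self-contained on the combinatorial side but rests on the vector-valued (degree-one weighted) reciprocity ${\bf s}_\Delta(-t)=(-1)^{n+1}{\bf s}^\circ_\Delta(t)$, which is classical (it follows from Stanley's reciprocity for the generating function of the cone $\mathcal C(\Delta)$, e.g.\ by differentiating as in \eqref{e1.9}, or from Brion--Vergne) but is not proved in the paper; you correctly flag this as the one step needing a citation or an argument. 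Both are legitimate; yours has the minor virtue of showing that for reflexive $\Delta$ the vanishing of $\int_\Delta{\bf x}\,dv$ is already a consequence of the combinatorial condition \eqref{e1.1} holding for all $i$, independently of any stability or Futaki-invariant input.
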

The equality \eqref{e1.11} is equivalent to
the necessary condition for asymptotic Chow semistability
of toric Fano manifolds proved in \cite{fos}.
Hence Corollary \ref{cor1.7} is a generalization of the result in \cite{fos}
to the case when $X_\Delta$ is not necessarily Fano.

This paper is organized as follows. In Section $2$, we first
review the definition
and some results
of semistability in the sense of geometric invariant theory \cite{git}.
We next give the definition of Chow form of projective varieties.
In Section $3$, we give a proof of our main theorem, Theorem \ref{thm1.2},
based on the results in \cite{gkz}. We also provide some examples of
Chow unstable polarized toric manifolds.

{\bf Acknowledgements} : The author is supported by MEXT, Grant-in-Aid for
Young Scientists (B), No. 20740032.

%%%%%%%%%%% Contents

\section{Preliminaries}

%%%%%%%%%% Definition of (semi)stability

Let $G$ be a reductive Lie group. Suppose that $G$ acts a
complex vector space $V$ linearly. We call a nonzero vector $v\in V$
$G$-semistable
if the closure of the orbit $Gv$ does not contain the origin.
Similarly we call $p\in \mathbb{P}(V)$ $G$-semistable if any representative
of $p$ in $V\setminus\{{\bf 0}\}$ is $G$-semistable.
It is well-known that there is the following
good criterion for $v$ being $G$-semistable, see
\cite{git}.

\begin{prop}[Hilbert-Mumford criterion, \cite{git}]\label{prop2.1}
$p\in \mathbb{P}(V)$ is $G$-semistable if and only if
$p$ is $H$-semistable for each maximal torus $H\subset G$.
\end{prop}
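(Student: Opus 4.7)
The plan is to reduce this to the classical one--parameter--subgroup form of the Hilbert--Mumford criterion, namely the statement that a nonzero vector $v\in V$ is $G$-unstable (that is, $0\in\overline{Gv}$) if and only if there exists an algebraic one--parameter subgroup $\lambda\colon \mathbb{C}^{\times}\to G$ such that $\lim_{t\to 0}\lambda(t)\cdot v=0$. I would cite this from \cite{git} and use it as a black box; the proposition then follows by a short conjugation argument.

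The forward implication is essentially formal. If $H\subset G$ is any closed subgroup and $v\in V\setminus\{\mathbf 0\}$ is any lift of $p$, then $Hv\subset Gv$, whence $\overline{Hv}\subset \overline{Gv}$. Thus $0\notin\overline{Gv}$ forces $0\notin\overline{Hv}$, and $p$ is $H$-semistable for every such $H$, in particular for every maximal torus.

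For the converse, assume $p$ is not $G$-semistable and fix a lift $v$ with $0\in\overline{Gv}$. By the 1--PS criterion there is a one--parameter subgroup $\lambda\colon \mathbb{C}^{\times}\to G$ with $\lim_{t\to 0}\lambda(t)\cdot v=\mathbf 0$. The image $T_0:=\overline{\lambda(\mathbb{C}^{\times})}$ is a (possibly trivial) subtorus of $G$, and in any reductive group every subtorus is contained in some maximal torus; pick one such $H\supset T_0$. Then the same $\lambda$ is a one--parameter subgroup of $H$ and witnesses $\mathbf 0\in\overline{Hv}$, so $p$ is not $H$-semistable, contradicting the hypothesis of the converse. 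This completes the equivalence.

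The only substantive obstacle is the 1--PS criterion itself, which is the non--trivial content of the classical Hilbert--Mumford theorem and which I would prefer to quote rather than reprove. If one wanted a self--contained derivation, the key input is that when $\mathbf 0\in \overline{Gv}\setminus Gv$, the valuative criterion of properness applied to a curve in $G$ that tends to this boundary point produces, via the Iwasawa/Cartan structure of the reductive group $G$, an honest algebraic one--parameter subgroup realising the degeneration; after that the proposition follows from the short two--step argument above.
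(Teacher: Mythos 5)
Your argument is correct. Note first that the paper itself offers no proof of Proposition 2.1 at all: it is stated as a standard fact and attributed to \cite{git}, so there is no in-paper argument to compare against. What you have written is the standard textbook reduction of the maximal-torus form of the Hilbert--Mumford criterion to the one-parameter-subgroup form: the forward direction is the formal observation that $\overline{Hv}\subset\overline{Gv}$ for any subgroup $H$, and the converse uses that a destabilizing one-parameter subgroup $\lambda$ (which is necessarily nontrivial, since $\lim_{t\to 0}\lambda(t)\cdot v=\mathbf 0\neq v$) has image lying in some maximal torus $H$, whence $\mathbf 0\in\overline{\lambda(\mathbb{C}^{\times})\cdot v}\subset\overline{Hv}$ directly, without even invoking the 1--PS criterion again for $H$. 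The entire nontrivial content is indeed the 1--PS criterion itself (via the valuative criterion of properness and the Iwahori/Cartan decomposition, as you indicate), which is exactly the part that both you and the paper outsource to \cite{git}; so your write-up adds a correct and harmless layer of reduction on top of the same black box the paper relies on. One small point of hygiene: since the paper defines semistability of $p\in\mathbb{P}(V)$ via an arbitrary representative, you may wish to remark that the condition $\mathbf 0\in\overline{Gv}$ is independent of the choice of lift $v$ of $p$, because the action is linear and $\overline{G(cv)}=c\,\overline{Gv}$ for $c\in\mathbb{C}^{\times}$.
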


%%%%%%%%%% Case of torus actions

Hence it is important to study $G$-semistability
when $G$ is isomorphic to an algebraic torus $(\mathbb{C}^\times )^n$.
Let $G$ be isomorphic to $(\mathbb{C}^\times )^n$.
Then a $G$-module $V$ is decomposed as
\begin{equation}\label{e2.1}
V=\sum_{\chi\in \chi(G)}V_\chi,\ \ V_\chi:=\{v\in V\,|\,
t\cdot v=\chi(t)v,\ \forall t\in G\},
\end{equation}
where $\chi(G)\simeq \mathbb{Z}^n$ is the character group of the torus $G$.

\begin{defn}\label{def2.2}
Let $v=\sum_{\chi\in \chi(G)}v_\chi$ be a nonzero vector in $V$.
The weight polytope $\textup{Wt}\,_G(v)\subset \chi(G)\otimes_{\mathbb{Z}}
\mathbb{R}$ of $v$ is the convex hull of $\{\chi\in \chi(G)\,|\,
v_\chi\not={\bf 0}\}$ in $\chi(G)\otimes_{\mathbb{Z}}
\mathbb{R}$.
\end{defn}

The following fact about $G$-semistability is standard.

\begin{prop}\label{prop2.3}
Let $G$ be isomorphic to $(\mathbb{C}^\times)^n$. Suppose that $G$ acts
a complex vector space $V$ linearly. Then a nonzero vector $v\in V$ is
$G$-semistable if and only if the weight polytope $\textup{Wt}\,_G(v)$
contains the origin.
\end{prop}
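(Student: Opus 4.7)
The plan is to prove both implications by working directly with the explicit weight decomposition of $v$. Write $v = \sum_{\chi\in S} v_\chi$, where $S\subset \chi(G)$ is the finite set of weights with $v_\chi \neq {\bf 0}$, so that $\textup{Wt}\,_G(v) = \mathrm{conv}(S)$. After identifying $G \simeq (\mathbb{C}^\times)^n$, every $t \in G$ can be written as $t = \exp(x + iy)$ with $x,y \in \mathbb{R}^n$, and each weight component then satisfies $\|(t\cdot v)_\chi\| = e^{\langle \chi, x\rangle}\|v_\chi\|$. Both directions reduce to reading off information from this family of exponentials.

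For the direction $0 \notin \textup{Wt}\,_G(v) \Rightarrow v$ unstable, I would apply the separating hyperplane theorem to the compact convex set $\textup{Wt}\,_G(v)$ and the origin: there is a linear functional $\xi$ on $\chi(G)\otimes_{\mathbb{Z}}\mathbb{R}$ with $\langle \chi, \xi\rangle > 0$ for every $\chi \in S$. Because $S$ is finite, strictness persists under small perturbations, so $\xi$ may be chosen rational and, after clearing denominators, identifies with a cocharacter $\lambda \in \mathrm{Hom}(\mathbb{C}^\times, G)$ satisfying $\langle \chi, \lambda\rangle > 0$ for all $\chi \in S$. Then $\lambda(t)\cdot v = \sum_{\chi\in S} t^{\langle \chi, \lambda\rangle} v_\chi \to 0$ as $t\to 0$, showing $0\in \overline{Gv}$, and hence $v$ is not semistable.

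For the converse, assume $0 \in \textup{Wt}\,_G(v)$, so there exist a subset $T\subseteq S$ and positive reals $\{r_\chi\}_{\chi \in T}$ with $\sum_{\chi \in T} r_\chi = 1$ and $\sum_{\chi \in T} r_\chi \chi = 0$. Suppose, for contradiction, that $v$ is not semistable, and pick a sequence $t_n = \exp(x_n + iy_n)\in G$ with $t_n \cdot v \to 0$. For each $\chi \in T \subset S$, the conditions $v_\chi \neq 0$ and $e^{\langle \chi, x_n \rangle}\|v_\chi\| \to 0$ force $\langle \chi, x_n\rangle \to -\infty$. Then the convex combination $\sum_{\chi \in T} r_\chi \langle \chi, x_n\rangle$ tends to $-\infty$; but it also equals $\bigl\langle \sum_{\chi \in T} r_\chi \chi,\, x_n\bigr\rangle = 0$ for every $n$, a contradiction.

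The main technical point is the rationality step in the first direction, where one must descend from a real separating functional to an honest cocharacter in order to invoke an algebraic one-parameter subgroup of $G$; because $S$ is finite and the separation is strict, this is automatic. Everything else reduces to elementary manipulation of the weight exponentials, so no deeper input (such as Kempf's instability stratification) is needed.
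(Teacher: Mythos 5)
Your proof is correct, and both directions are the standard argument (strict separation plus a rational perturbation to produce a destabilizing one-parameter subgroup in one direction; a convex combination of weights forcing a contradiction in the other). The paper itself offers no proof of Proposition~2.3 --- it is simply declared standard --- so there is nothing to compare against; the only point worth making explicit in your write-up is that passing from ``$0$ lies in the orbit closure'' to ``there is a sequence $t_n$ with $t_n\cdot v\to 0$'' uses that the Zariski closure and the Euclidean closure of the orbit of an algebraic group over $\mathbb{C}$ coincide, which is standard for constructible sets.
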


Let $G=(\mathbb{C}^\times)^{n+1}$ and $H$ be the subtorus
\begin{equation}\label{e2.2}
H=\{(t_1,\dots,t_n,(t_1\cdots t_n)^{-1})\,|\,
(t_1,\dots,t_n)\in (\mathbb{C}^\times)^n\}\simeq (\mathbb{C}^\times)^n.
\end{equation}
Then the weight polytope $\text{Wt}\,_H(v)\subset \chi(H)\otimes_{\mathbb{Z}}
\mathbb{R}\simeq \mathbb{R}^n$ equals to $\pi(\text{Wt}\,_G(v))$, where
the linear map $\pi:\mathbb{R}^{n+1}\to \mathbb{R}^n$ is given as
$(x_1,\dots,x_n,x_{n+1})\mapsto (x_1-x_{n+1},\dots,x_n-x_{n+1})$.

Therefore we see the following.

\begin{prop}\label{prop2.4}
If $v$ is $H$-semistable then there exists $t\in \mathbb{R}$
such that $(t,\dots,t)\in \textup{Wt}\,_G(v)$.
\end{prop}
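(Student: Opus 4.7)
The plan is to combine the two immediately preceding results: Proposition 2.3 characterizing semistability via the weight polytope, and the identity $\mathrm{Wt}_H(v)=\pi(\mathrm{Wt}_G(v))$ for the projection $\pi\colon \mathbb{R}^{n+1}\to\mathbb{R}^n$ defined by $(x_1,\dots,x_n,x_{n+1})\mapsto(x_1-x_{n+1},\dots,x_n-x_{n+1})$.

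First I would apply Proposition 2.3 to the $H$-action on $V$: since $H\simeq(\mathbb{C}^\times)^n$ is an algebraic torus, the assumption that $v$ is $H$-semistable is equivalent to the origin of $\chi(H)\otimes_{\mathbb{Z}}\mathbb{R}\simeq\mathbb{R}^n$ lying in the weight polytope $\mathrm{Wt}_H(v)$.

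Next I would use the identification $\mathrm{Wt}_H(v)=\pi(\mathrm{Wt}_G(v))$ recalled just above the statement. This identification holds because $H\subset G$ is the subtorus \eqref{e2.2} and the restriction of characters corresponds, at the level of character lattices, precisely to the linear map $\pi$ (one checks this on the monomial weights and then extends to convex hulls, using that $\pi$ is linear so it commutes with taking convex hulls). Combining with the previous step, the origin of $\mathbb{R}^n$ belongs to $\pi(\mathrm{Wt}_G(v))$, so there exists a point $(x_1,\dots,x_n,x_{n+1})\in\mathrm{Wt}_G(v)$ with $\pi(x_1,\dots,x_n,x_{n+1})=0$.

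Finally, the equation $\pi(x_1,\dots,x_n,x_{n+1})=0$ unpacks as $x_1-x_{n+1}=\cdots=x_n-x_{n+1}=0$, i.e.\ $x_1=\cdots=x_n=x_{n+1}=:t$, giving the desired diagonal point $(t,\dots,t)\in\mathrm{Wt}_G(v)$. In short, the claim is just the set-theoretic statement $\ker\pi\cap\mathrm{Wt}_G(v)\neq\emptyset$, which is immediate from Proposition 2.3 applied to $H$ together with $\mathrm{Wt}_H(v)=\pi(\mathrm{Wt}_G(v))$. There is no real obstacle here; the only point that deserves a line of justification is the compatibility of weight polytopes with the subtorus inclusion, which reduces to the elementary observation that $\pi$ is precisely the dual of the inclusion $H\hookrightarrow G$ on character lattices.
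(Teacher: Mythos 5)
Your argument is correct and is precisely the one the paper intends (the paper states Proposition 2.4 as an immediate consequence of Proposition 2.3 applied to $H$ together with the identity $\textup{Wt}\,_H(v)=\pi(\textup{Wt}\,_G(v))$ recorded just before it). Nothing is missing; the unpacking of $\ker\pi$ as the diagonal is exactly the point.
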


%%%%%%%%%% Chow form of algebraic cycles

We next define the Chow form of irreducible projective varieties.
See \cite{gkz} for more detail.

\begin{defn}\label{def2.5}
Let $X\subset \mathbb{C}P^N$ be an $n$-dimensional irreducible subvariety of
degree $d$. It is easy to see that the subset $Z_X$ of the
Grassmannian $\textup{Gr}(N-n-1,\mathbb{C}P^N)$ defined by
$$Z_X=\{L\in \textup{Gr}(N-n-1,\mathbb{C}P^N)\,|\,
L\cap X\not=\emptyset\}$$
is an irreducible hypersurface of degree $d$.
Hence $Z_X$ is given by the vanishing of a degree $d$ element $R_X\in
\mathbb{P}(\mathcal B_d(N-n-1,\mathbb{C}P^N))$, where
$\mathcal B(N-n-1,\mathbb{C}P^N)=\oplus_{d}\mathcal B_d(N-n-1,\mathbb{C}P^N)$
is the graded coordinate ring of the Grassmannian. We call $R_X$ the
Chow form of $X$.
\end{defn}

Since the special linear group $SL(N+1,\mathbb{C})$ acts naturally on
$\mathcal B_d(N-n-1,\mathbb{C}P^N)$, we can consider the 
$SL(N+1,\mathbb{C})$-stability
of the Chow form $R_X$.

%%%%%%%%%% Definition of (asymptotic) Chow (semi)stability and
%%%%%%%%%% some necessary conditions

\begin{defn}\label{def2.6}
Let $X\subset \mathbb{C}P^N$ be an $n$-dimensional irreducible subvariety
of degree $d$. We call $X$ Chow semistable if the Chow form $R_X$
is $SL(N+1,\mathbb{C})$-semistable.
When $X$ is not Chow semistable $X$ is called Chow unstable.
\end{defn}

\begin{defn}\label{def2.7}
Let $(X,L)$ be a polarized manifold. $(X,L)$ is called 
asymptotically Chow semistable when $\Psi_i(X)\subset \mathbb{P}(H^0(X;L^i)^*)$
is Chow semistable for each $i\gg 1$. Here $\Psi_i:X\to 
\mathbb{P}(H^0(X;L^i)^*)$
is the Kodaira embedding.
\end{defn}

\section{Asymptotic Chow semistability of polarized toric manifolds}

%%%%%%%%% Results of Gelfand-Kapranov-Zelevinsky

In this section,
we first introduce a necessary condition for Chow semistability of
$n$-dimensional irreducible projective subvariety
$X\subset \mathbb{C}P^N$. This condition is very simple and easy.
However this leads us to our main theorem when $X$ is toric.

Let $(\mathbb{C}^\times)^{N+1}\subset GL(N+1,\mathbb{C})$ be the
$(N+1)$-dimensional torus consists of invertible diagonal matrices.
Then the subtorus $H$ defined by \eqref{e2.2} is a maximal torus of
$SL(N+1,\mathbb{C})$. 
On the one hand, Proposition \ref{prop2.4} implies the following.

\begin{prop}\label{prop3.2}
Let $X\subset \mathbb{C}P^N$ be an irreducible subvariety.
If $X$ is Chow semistable, then there exists $t\in \mathbb{R}$ such that
\begin{equation}\label{e3.2}
(t,\dots,t)\in \textup{Wt}\,_{(\mathbb{C}^\times)^{N+1}}(R_X)\subset
\textup{Aff}_{\mathbb{R}}(\textup{Wt}\,_{(\mathbb{C}^\times)^{N+1}}(R_X)).
\end{equation}
\end{prop}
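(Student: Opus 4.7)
The plan is to assemble the statement from the propositions already in hand: this is essentially a one-line deduction from the Hilbert--Mumford criterion combined with Proposition~\ref{prop2.4}. The substantive work has already been done in Section~2.

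First, I would unpack what Chow semistability means. By Definition~\ref{def2.6}, Chow semistability of $X\subset \mathbb{C}P^N$ is the $SL(N+1,\mathbb{C})$-semistability of the Chow form $R_X \in \mathbb{P}(\mathcal B_d(N-n-1,\mathbb{C}P^N))$. I need a representative in $\mathcal B_d(N-n-1,\mathbb{C}P^N)\setminus \{\mathbf 0\}$ whose orbit closure under $SL(N+1,\mathbb{C})$ avoids the origin; fix any such lift, which by abuse of notation I continue to call $R_X$.

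Next I would specialize to the maximal torus. The torus $H \subset (\mathbb{C}^\times)^{N+1}$ defined in~\eqref{e2.2} is a maximal torus of $SL(N+1,\mathbb{C})$, as already noted at the start of this section. Proposition~\ref{prop2.1} (Hilbert--Mumford) applied to $R_X$ therefore gives that $R_X$ is $H$-semistable. At this point Proposition~\ref{prop2.4}, applied with $G=(\mathbb{C}^\times)^{N+1}$, the subtorus $H$, and $v=R_X$, directly yields the existence of $t\in\mathbb{R}$ with $(t,\dots,t)\in \textup{Wt}_{(\mathbb{C}^\times)^{N+1}}(R_X)$.

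Finally, the inclusion $\textup{Wt}_{(\mathbb{C}^\times)^{N+1}}(R_X)\subset \textup{Aff}_{\mathbb{R}}(\textup{Wt}_{(\mathbb{C}^\times)^{N+1}}(R_X))$ in~\eqref{e3.2} is tautological from the definition of affine hull. There is no genuine obstacle in this proof; the ``main obstacle,'' if any, was in proving Proposition~\ref{prop2.4}, which combines Proposition~\ref{prop2.3} with the observation that the projection $\pi:\mathbb{R}^{N+1}\to \mathbb{R}^N$ sends $\textup{Wt}_G(v)$ onto $\textup{Wt}_H(v)$ and has kernel spanned by $(1,\dots,1)$, so that $\mathbf 0\in \textup{Wt}_H(v)$ is equivalent to some $(t,\dots,t)$ lying in $\textup{Wt}_G(v)$. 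The real content of the section will appear in the next step, where this simple necessary condition is translated into the concrete equality~\eqref{e1.1} by identifying the weights of $R_X$ for the toric embedding with lattice points of $i\Delta$.
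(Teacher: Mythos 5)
Your argument is correct and is exactly the route the paper intends: the paper derives Proposition~\ref{prop3.2} by noting that $H$ in~\eqref{e2.2} is a maximal torus of $SL(N+1,\mathbb{C})$, so Chow semistability plus the Hilbert--Mumford criterion gives $H$-semistability of $R_X$, and Proposition~\ref{prop2.4} then produces the diagonal point in the weight polytope. The inclusion into the affine hull is indeed tautological, so nothing is missing.
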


On the other hand, Gelfand, Kapranov and Zelivinsky showed the
following.

\begin{prop}[\cite{gkz}, Chapter $6$, Proposition $3.8$]\label{prop3.1}
Let $X\subset \mathbb{C}P^N$ be an $n$-dimensional irreducible subvariety.
Suppose that $X$ does not contained in any projective hyperplane.
Then we have
\begin{equation}\label{e3.1}
\dim \textup{Aff}_{\mathbb{R}}(\text{Wt}\,_{(\mathbb{C}^\times)^{N+1}}(R_X))=
N+1-\dim\{t\in (\mathbb{C}^\times )^{N+1}\,|\, tX=X\},
\end{equation}
where $\textup{Aff}_{\mathbb{R}}(\textup{Wt}
\,_{(\mathbb{C}^\times)^{N+1}}(R_X))$
is the affine hull of the weight polytope
$\textup{Wt}\,_{(\mathbb{C}^\times)^{N+1}}(R_X)$ of $R_X$
in $\mathbb{R}^{N+1}$.
\end{prop}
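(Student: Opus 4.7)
The plan is to reduce the dimension count to a general principle about weight polytopes under torus actions, and then to identify the resulting stabilizer with the geometric automorphism group of $X$. First, I would prove the following general lemma: for any torus $T$ acting linearly on a finite-dimensional complex vector space $V$ and any nonzero $v \in V$, one has
\begin{equation*}
\dim \textup{Aff}_{\mathbb{R}}(\textup{Wt}_T(v)) \;=\; \dim T - \dim \textup{Stab}_T([v]),
\end{equation*}
where $\textup{Stab}_T([v]) \subset T$ denotes the stabilizer of the projective class $[v] \in \mathbb{P}(V)$. This follows by decomposing $v = \sum_\chi v_\chi$ into weight spaces: $t \cdot v$ is a scalar multiple of $v$ iff $\chi(t) = \chi'(t)$ for every pair of characters $\chi,\chi'$ in the support of $v$, so $\textup{Stab}_T([v])$ is precisely the common kernel of the character differences $\chi - \chi'$. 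The codimension of this common kernel in $T$ equals the rank of the subgroup of the character lattice generated by these differences, which is exactly $\dim \textup{Aff}_{\mathbb R}(\textup{Wt}_T(v))$.

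Applying this with $T = (\mathbb{C}^\times)^{N+1}$ and $v = R_X$ reduces Proposition \ref{prop3.1} to verifying
\begin{equation*}
\textup{Stab}_{(\mathbb{C}^\times)^{N+1}}([R_X]) \;=\; \{t \in (\mathbb{C}^\times)^{N+1} : tX = X\}.
\end{equation*}
The inclusion ``$\supseteq$'' follows directly from the defining property of the Chow form in Definition \ref{def2.5}: if $tX = X$, the hypersurfaces $Z_X$ and $Z_{tX}$ in the Grassmannian coincide, so their defining forms $R_X$ and $R_{tX} = t \cdot R_X$ are proportional. For the reverse inclusion, $t \cdot [R_X] = [R_X]$ gives $Z_{tX} = Z_X$; one then recovers $X$ from $Z_X$ as the locus of points $p \in \mathbb{C}P^N$ such that every $(N-n-1)$-plane through $p$ belongs to $Z_X$, which forces $tX = X$.

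The main technical obstacle is the reconstruction $Z_X \mapsto X$. While the ``$p \in X \Rightarrow$ every $(N-n-1)$-plane through $p$ meets $X$'' direction is trivial, the converse requires a dimension-count argument: if $p \notin X$, then projecting $X$ away from $p$ lands in $\mathbb{C}P^{N-1}$ with dimension $\leq n$, so a generic $(N-n-2)$-plane there avoids the projection, lifting to an $(N-n-1)$-plane through $p$ missing $X$. The non-degeneracy hypothesis that $X$ lies in no projective hyperplane enters at this point to ensure that the Chow form genuinely encodes $X$ rather than a degenerate projection of it, and in particular to rule out spurious additional stabilizer coming from coordinate hyperplanes containing $X$. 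Combining the two steps yields the chain $\dim \textup{Aff}_{\mathbb R}(\textup{Wt}(R_X)) = (N+1) - \dim \textup{Stab}([R_X]) = (N+1) - \dim\{t : tX = X\}$, which is exactly the asserted formula.
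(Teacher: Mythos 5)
The paper does not actually prove this proposition: it is imported verbatim from Gelfand--Kapranov--Zelevinsky (Chapter 6, Proposition 3.8), so there is no internal argument to compare yours against, and any self-contained proof is "a different route" by default. That said, your reconstruction is essentially the standard one and I believe it is correct. The general lemma --- that for a torus $T$ acting on $V$ and $0\neq v\in V$ one has $\dim \textup{Aff}_{\mathbb{R}}(\textup{Wt}_T(v))=\dim T-\dim\textup{Stab}_T([v])$, because $\textup{Stab}_T([v])$ is the common kernel of the differences of characters in the support of $v$, and the codimension of that kernel equals the rank of the lattice those differences generate, i.e.\ the dimension of the affine hull of the support --- is sound and meshes correctly with Proposition \ref{prop2.3}. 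The identification $\textup{Stab}_{(\mathbb{C}^\times)^{N+1}}([R_X])=\{t\,|\,tX=X\}$ via equivariance of $X\mapsto R_X$ together with the recovery of $X$ from $Z_X$ (projection from a point $p\notin X$ to produce an $(N-n-1)$-plane through $p$ missing $X$) is also the standard argument; note both sides automatically contain the scalar subgroup, which is why the affine hull never has dimension exceeding $N$.

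The one genuinely weak spot is your treatment of the non-degeneracy hypothesis. The sentence claiming it "rules out spurious additional stabilizer coming from coordinate hyperplanes" is not an argument: the right-hand side of \eqref{e3.1} is the full torus stabilizer of $X$, so any extra symmetry coming from a degenerate embedding is already accounted for there, and the reconstruction of $X$ from $Z_X$ that you give works for an arbitrary irreducible $X$. As written, nothing in your proof uses the hypothesis. You should therefore either state plainly that your argument does not require it (harmless here, since the varieties $X_A$ to which the proposition is applied are non-degenerate), or locate precisely where it enters; telling the reader that a hypothesis is essential at a step where it visibly plays no role invites the suspicion that a case has been overlooked.
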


Therefore, as $\dim\{t\in (\mathbb{C}^\times )^{N+1}\,|\, tX=X\}$ is
larger, it is harder that $X$ is Chow semistable.

%%%%%%%%% Case of polarized toric manifolds and proof of Main theorem

We next investigate Chow semistability of projective varieties
defined as follows. Let $A:=\{{\bf a}_1,\dots,{\bf a}_{N+1}\}$
be a finite subset in
$\mathbb{Z}^n$. Suppose that $A$ affinely generates the lattice $\mathbb{Z}^n
\subset \mathbb{R}^n$ over $\mathbb{Z}$. Then the closure of
$$X_A^0:=
\{[{\bf x}^{{\bf a}_1}:\dots:{\bf x}^{{\bf a}_{N+1}}]\,|\,
{\bf x}\in (\mathbb{C}^\times)^n\}
\subset
\mathbb{C}P^N$$
is an $n$-dimensional subvariety of $\mathbb{C}P^N$. We denote by $X_A=
\overline{X_A^0}$.

\begin{prop}[\cite{gkz}, Chapter $7$, Proposition $1.11$ $\&$ \cite{ksz}]
\label{prop3.3}
\begin{equation}\label{e3.3}
\begin{split}
&\textup{Aff}_{\mathbb{R}}(\text{Wt}\,_{(\mathbb{C}^\times)^{N+1}}(R_{X_A}))\\
&=\left\{(\varphi_1,\dots,\varphi_{N+1})\in \mathbb{R}^{N+1}\,|\,
\sum_{j=1}^{N+1}\varphi_j=(n+1)!\vol(Q),\ 
\sum_{j=1}^{N+1}\varphi_j{\bf a}_j=(n+1)!\int_Q{\bf x}dv
\right\}.
\end{split}
\end{equation}
\end{prop}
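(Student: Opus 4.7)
The plan is to combine the dimension formula of Proposition~\ref{prop3.1} with an explicit identification of the character by which the stabilizer of $X_A$ in $(\mathbb{C}^\times)^{N+1}$ acts on the Chow form $R_{X_A}$. Once one shows that both sides of \eqref{e3.3} are affine subspaces of dimension $N-n$ and that the LHS is contained in the RHS, equality follows automatically.

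First I would show that the stabilizer $S:=\{t\in(\mathbb{C}^\times)^{N+1}\,|\,tX_A=X_A\}$ has dimension exactly $n+1$. The diagonal $\mathbb{C}^\times$ lies in $S$ because it acts trivially on $\mathbb{C}P^N$, and the subtorus $\{(\mathbf{s}^{\mathbf{a}_1},\dots,\mathbf{s}^{\mathbf{a}_{N+1}})\,:\,\mathbf{s}\in(\mathbb{C}^\times)^n\}$ lies in $S$ because its action on the dense orbit of $X_A$ is the reparametrization $\mathbf{x}\mapsto \mathbf{s}\mathbf{x}$; affine generation of $\mathbb{Z}^n$ by $A$ makes their combined image $(n+1)$-dimensional. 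Conversely, every element of $S$ acts on $X_A^0\simeq(\mathbb{C}^\times)^n$ by an algebraic automorphism that extends to $X_A$, hence by a translation of the big torus, so the map $S\to \textup{Aut}(X_A^0)$ lands in an $n$-dimensional target with kernel contained in the diagonal $\mathbb{C}^\times$; thus $\dim S\le n+1$. Proposition~\ref{prop3.1} then yields $\dim\textup{Aff}_{\mathbb{R}}(\textup{Wt}_{(\mathbb{C}^\times)^{N+1}}(R_{X_A}))=N-n$, and the same affine generation hypothesis guarantees that the $n+1$ equations defining the RHS of \eqref{e3.3} are linearly independent, so the RHS also has dimension $N-n$.

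For the inclusion LHS $\subset$ RHS, note that $S$ is connected and preserves the line $\mathbb{C}\cdot R_{X_A}$, so it acts on this line through a single character $\chi_0\in\chi(S)$. Every weight $\varphi$ of $R_{X_A}$ must restrict to $\chi_0$ on $S$, which imposes $n+1$ linear equations on $\varphi\in\mathbb{R}^{N+1}$. Evaluating $\chi_0$ on the diagonal generator of $S$ gives $\sum_j\varphi_j=$ (scalar weight of $R_{X_A}$), which in the normalization of \cite{gkz} (where the Chow form is viewed as a polynomial of multi-degree $(d,\dots,d)$ in the coefficients of $n+1$ generic linear forms, with $d=\deg X_A=n!\,\vol(Q)$) equals $(n+1)d=(n+1)!\,\vol(Q)$; this recovers the first defining equation of the RHS.

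The main obstacle is identifying $\chi_0$ on the remaining $n$ generators of $S$ as $(n+1)!\int_Q\mathbf{x}\,dv$, producing the vector-valued equation $\sum_j\varphi_j\mathbf{a}_j=(n+1)!\int_Q\mathbf{x}\,dv$. Following \cite{gkz}, one realizes $R_{X_A}$ as the sparse $A$-resultant and uses the secondary polytope: extreme monomials of $R_{X_A}$ are indexed by coherent triangulations of $Q$, and the $(\mathbb{C}^\times)^{N+1}$-weight of such a monomial decomposes as a sum of simplex-wise contributions that, via the centroid formula $\int_\sigma\mathbf{x}\,dv=\vol(\sigma)\cdot\frac{1}{n+1}\sum_{i=0}^n v_i$ for a simplex $\sigma$ with vertices $v_0,\dots,v_n$, evaluate to $(n+1)!\int_Q\mathbf{x}\,dv$. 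Once this identification of $\chi_0$ is established, containment LHS $\subset$ RHS is immediate, and combining with the matching of dimensions from the first step forces the equality \eqref{e3.3}.
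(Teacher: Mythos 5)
The paper does not actually prove Proposition~\ref{prop3.3}: it is quoted verbatim from the literature, namely from the affine-span description of the secondary polytope (\cite{gkz}, Ch.~7, Prop.~1.11) combined with the theorem of \cite{ksz} that the Chow polytope of $X_A$ is the secondary polytope $\Sigma(A)$. So there is no in-paper argument to match; judged on its own, your reconstruction is essentially the correct assembly of those ingredients. The skeleton is sound: the stabilizer $S$ has dimension exactly $n+1$ (your lower bound via the subtorus $\{(c\,\mathbf{s}^{\mathbf{a}_1},\dots,c\,\mathbf{s}^{\mathbf{a}_{N+1}})\}$ uses affine generation correctly, and the upper bound via $S\to\mathrm{Aut}(X_A^0)$ with kernel in the scalars works at the level of identity components); Proposition~\ref{prop3.1} then gives $\dim\mathrm{Aff}_{\mathbb{R}}(\mathrm{Wt}(R_{X_A}))=N-n$; the right-hand side of \eqref{e3.3} is an affine space of the same dimension because the $n+1$ defining forms are independent exactly when $A$ affinely spans; and an inclusion between affine spaces of equal dimension is an equality. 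Your identification of the two constants is also the standard one: $\sum_j\varphi_j=(n+1)d$ with $d=n!\vol(Q)$ (Kushnirenko, plus generic injectivity of the monomial parametrization, again from affine generation), and $\sum_j\varphi_j\mathbf{a}_j=(n+1)!\int_Q\mathbf{x}\,dv$ by evaluating on one extreme monomial $\varphi_T$, where $\varphi_{T,j}=\sum_{\sigma\ni\mathbf{a}_j}\vol_{\mathbb{Z}}(\sigma)$ and the barycenter identity gives $\sum_\sigma\vol_{\mathbb{Z}}(\sigma)\sum_i v_i=(n+1)!\int_Q\mathbf{x}\,dv$.

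Three caveats, none fatal. First, to invoke Proposition~\ref{prop3.1} you must check that $X_A$ lies in no hyperplane; this is true because the distinct characters $\mathbf{x}^{\mathbf{a}_j}$ are linearly independent, but you should say it. Second, and more substantively: the description of the extreme monomials of $R_{X_A}$ by coherent triangulations \emph{is} the Chow-polytope-equals-secondary-polytope theorem of \cite{ksz}, which is at least as deep as the proposition being proved. Your argument is therefore a proof modulo that citation (legitimate here, since the paper itself only cites), but it should not be presented as an independent derivation; note also that you only need a single extreme monomial, not the full vertex description, to pin down $\chi_0$. Third, the assertion that $S$ is connected is unjustified (a closed subgroup of a torus may be disconnected) but also unnecessary: restrict the weights to the explicit $(n+1)$-dimensional subtorus you already constructed, which suffices for the $n+1$ linear conditions. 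Finally, the sign and duality conventions for how $(\mathbb{C}^\times)^{N+1}$ acts on $\mathcal{B}_d$ versus on the coefficients of the $n+1$ generic linear forms need to be fixed once to get the stated signs; as written you gloss over this, though with the normalization of \cite{gkz} the constants come out as claimed.
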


Hence, by Propositions \ref{prop3.2} and \ref{prop3.3}, we get a necessary
condition for Chow semistability of $X_A$.

\begin{thm}\label{thm3.4}
If $X_A$ is Chow semistable then we have
\begin{equation}\label{e3.4}
\sum_{j=1}^{N+1}{\bf a}_j=\frac{N+1}{\vol (Q)}\int_Q{\bf x}dv.
\end{equation}
\end{thm}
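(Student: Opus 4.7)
The plan is to combine Propositions \ref{prop3.2} and \ref{prop3.3} directly; once both are in hand, the theorem reduces to substituting $\varphi_j = t$ for all $j$ into the two linear equations cutting out the affine hull of the weight polytope and eliminating $t$.

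First I would invoke Proposition \ref{prop3.2}: since $X_A$ is Chow semistable, there exists some $t \in \mathbb{R}$ with the diagonal point $(t,\dots,t)$ lying in $\textup{Aff}_{\mathbb{R}}(\textup{Wt}\,_{(\mathbb{C}^\times)^{N+1}}(R_{X_A}))$. Next I would apply Proposition \ref{prop3.3}, which identifies this affine hull with the solution set of the two linear conditions
\begin{equation*}
\sum_{j=1}^{N+1}\varphi_j = (n+1)!\vol(Q), \qquad \sum_{j=1}^{N+1}\varphi_j {\bf a}_j = (n+1)!\int_Q {\bf x}\,dv.
\end{equation*}

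Plugging $\varphi_j = t$ into the first equation gives $(N+1)t = (n+1)!\vol(Q)$, which determines
\begin{equation*}
t = \frac{(n+1)!\vol(Q)}{N+1}.
\end{equation*}
Plugging $\varphi_j = t$ into the second equation yields $t\sum_{j=1}^{N+1}{\bf a}_j = (n+1)!\int_Q {\bf x}\,dv$, and substituting the value of $t$ computed above and solving for the sum produces exactly \eqref{e3.4}.

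There is essentially no obstacle; the entire argument is a two-line linear-algebra consequence of the cited propositions. The only thing one should double-check is that the hypotheses of Proposition \ref{prop3.3} (and hence of Proposition \ref{prop3.1}, which requires $X_A$ not to be contained in a projective hyperplane) are in force. This follows from the standing assumption that $A$ affinely generates $\mathbb{Z}^n$, so that the monomial map has image of full projective dimension $N$. Once this is noted, the derivation above is complete.
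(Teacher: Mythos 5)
Your proposal is correct and follows exactly the paper's argument: apply Proposition \ref{prop3.2} to place a diagonal point $(t,\dots,t)$ in the affine hull, use Proposition \ref{prop3.3} to get the two linear relations $(N+1)t=(n+1)!\vol(Q)$ and $t\sum_j{\bf a}_j=(n+1)!\int_Q{\bf x}\,dv$, and eliminate $t$. The added remark verifying that $X_A$ lies in no hyperplane is a harmless extra precaution not spelled out in the paper.
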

\begin{proof}
If $X_A$ is Chow semistable then
there exists $t\in \mathbb{R}$ satisfying
\begin{equation}\label{e3.5}
(N+1)t=(n+1)!\vol (Q),\ \ t\sum_{j=1}^{N+1}{\bf a}_j=(n+1)!\int_Q{\bf x}dv
\end{equation}
by Propositions \ref{prop3.2} and \ref{prop3.3}. Hence \eqref{e3.4} holds.
\end{proof}

\begin{proof}[Proof of Theorem \ref{thm1.2}]
Let $\Delta\subset \mathbb{R}^n$ be an $n$-dimensional integral Delzant
polytope and $A:=i\Delta\cap \mathbb{Z}^n$. Then $X_A$ is the image
of the Kodaira embedding $X_\Delta\to \mathbb{P}(H^0(X_\Delta,L_\Delta^i)^*)$.
Therefore if $(X_\Delta,L_\Delta^i)$ is Chow semistable
then we have
\begin{align*}
{\bf s}_\Delta(i) &= 
\frac{E_\Delta(i)}{\vol (i\Delta)}\int_{i\Delta}{\bf x}dv
=\frac{iE_\Delta(i)}{\vol (\Delta)}\int_{\Delta}{\bf x}dv
\end{align*}
by Theorem \ref{thm3.4}.
\end{proof}

\begin{proof}[Proof of Corollary \ref{cor1.6}]
By \cite{f}, when $X_\Delta$ is asymptotically Chow semistable,
the
Futaki invariant of $X_\Delta$, in this case $\int_\Delta{\bf x}dv$, vanishes. 
Therefore ${\bf s}_\Delta(i)={\bf 0}$ for any positive integer $i$.
Conversely, suppose that ${\bf s}_\Delta(i)=\int_\Delta{\bf x}dv={\bf 0}$
for any positive integer $i$. From the result of Wang and Zhu \cite{wz}, 
there exists an K\"ahler-Einstein metrics on $X_\Delta$. Moreover
we see that the derivative of Hilbert series $C_\Delta$ vanishes
by \eqref{e1.9}.
Hence the integral invariant $\mathcal F_{\text{Td}^{(p)}}$ vanish for 
each $p=1,\dots,n$ \cite{fos}. Therefore by the result of Mabuchi \cite{m2},
$X_\Delta$ is asymptotically Chow semistable.
\end{proof}

%%%%%%%% Examples

We give some examples of polarized toric manifolds and investigate
Chow semistability.
We first investigate polarized toric surfaces.
The equalities \eqref{e1.2} and \eqref{e1.3} imply the following.

\begin{lem}\label{lem3.5}
Let $P\subset \mathbb{R}^2$ be an integral polygon.
Then we have
\begin{equation}\label{e3.6}
\begin{split}
E_P(t)=\vol(P)t^2+(E_P(2)-E_P(1)-3\vol(P))t
+2E_P(1)-E_P(2)+2\vol(P)
\end{split}
\end{equation}
and
\begin{equation}\label{e3.7}
\begin{split}
{\bf s}_P(t)=t^3\int_P {\bf x}dv+\frac{t^2}{2}\left({\bf s}_P(2)
-2{\bf s}_P(1)-6\int_P {\bf x}dv\right)+\frac{t}{2}\left(4{\bf s}_P(1)
-{\bf s}_P(2)+4\int_P{\bf x}dv\right).
\end{split}
\end{equation}
\end{lem}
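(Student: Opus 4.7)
The plan is to read the lemma as a routine linear-algebra exercise: the general forms of $E_P(t)$ and ${\bf s}_P(t)$ given in \eqref{e1.2} and \eqref{e1.3} already specify the leading coefficients and (in the case of ${\bf s}_P$) the absence of a constant term, so the remaining coefficients are determined by the values at $t=1$ and $t=2$.

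First I would specialize \eqref{e1.2} to $n=2$, which gives
$E_P(t)=\vol(P)\,t^2+E_{P,1}\,t+E_{P,0}$,
so only $E_{P,1}$ and $E_{P,0}$ are unknown. Setting $t=1$ and $t=2$ produces a $2\times 2$ linear system; subtracting the two equations eliminates $E_{P,0}$ and yields
$E_{P,1}=E_P(2)-E_P(1)-3\vol(P)$,
and then $E_{P,0}=2E_P(1)-E_P(2)+2\vol(P)$. Plugging these back into the general form gives \eqref{e3.6}.

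Next I would specialize \eqref{e1.3} to $n=2$, which gives
${\bf s}_P(t)=t^3\!\int_P{\bf x}\,dv+t^2\,{\bf s}_{P,2}+t\,{\bf s}_{P,1}$
(note that the sum in \eqref{e1.3} starts at $j=1$, so there is no constant term — this is precisely the input that makes the system of two equations in two unknowns determinate). Evaluating at $t=1$ and $t=2$, then eliminating ${\bf s}_{P,1}$ from the pair, gives
${\bf s}_{P,2}=\tfrac{1}{2}\bigl({\bf s}_P(2)-2{\bf s}_P(1)-6\!\int_P{\bf x}\,dv\bigr)$,
and substitution yields
${\bf s}_{P,1}=\tfrac{1}{2}\bigl(4{\bf s}_P(1)-{\bf s}_P(2)+4\!\int_P{\bf x}\,dv\bigr)$. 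Reassembling produces \eqref{e3.7}.

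There is no conceptual obstacle here; the entire content is that \eqref{e1.2} and \eqref{e1.3} specify enough of the polynomials that two interpolation conditions suffice to pin down the rest. The only thing to be careful about is the componentwise interpretation for the vector-valued polynomial ${\bf s}_P(t)$, but since the linear system has scalar coefficients, it may be solved for each coordinate separately (or equivalently, directly in $\mathbb{R}^2$). The purpose of recording this lemma is presumably to feed explicit expressions for ${\bf s}_P(i)$ into the obstruction $\mathcal{F}_{\Delta,j}$ when testing \eqref{e1.1} on two-dimensional examples later in the section.
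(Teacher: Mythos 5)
Your proposal is correct and is exactly the argument the paper intends: the paper introduces the lemma with the single remark that \eqref{e1.2} and \eqref{e1.3} imply it, i.e.\ the leading coefficients and the vanishing constant term of ${\bf s}_P$ are known, and the remaining coefficients are pinned down by interpolation at $t=1,2$. Your explicit solution of the two linear systems reproduces the stated coefficients, so there is nothing to add.
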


For example, let $\Delta_k$ be the convex hull of
$\{(0,0),(0,1),(1,1),(k,0)\}\subset \mathbb{R}^2$ for $k\ge 2$.
The corresponding toric surface $X_{\Delta_k}$ is the $(k-1)$-th
Hirzebruch surface. $X_{\Delta_k}$ is not Fano for $k\ge 3$.
By Theorem \ref{thm1.2}
we see Chow unstability of $(X_{\Delta_k},L_{\Delta_k}^i)$.

\begin{prop}
For each integers $k\ge 2$ and $i\ge 1$,
$(X_{\Delta_k},L_{\Delta_k}^i)$ is Chow unstable.
\end{prop}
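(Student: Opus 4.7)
The strategy is to invoke the contrapositive of Theorem \ref{thm1.2}: it suffices to exhibit, for each $k\ge 2$ and $i\ge 1$, a single coordinate in which \eqref{e1.1} fails for $\Delta_k$. The plan is to examine the $y$-coordinate, which is cleaner than the $x$-coordinate because every horizontal slice of $i\Delta_k$ has integer length.

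First, the continuous data. Since $\Delta_k$ is a trapezoid with parallel horizontal sides of length $k$ and $1$ at heights $0$ and $1$, we have $\vol(\Delta_k)=(k+1)/2$; its slanted edge lies on the line $x=k-(k-1)y$, and a direct integration in horizontal slices gives $\int_{\Delta_k} y\, dv=(k+2)/6$.

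Next, the lattice data. Because $ik-(k-1)y\in\mathbb{Z}$ for every integer $y$, each slice $y\in\{0,1,\dots,i\}$ of $i\Delta_k$ contributes exactly $ik-(k-1)y+1$ lattice points, and the sums
\begin{equation*}
E_{\Delta_k}(i)=\sum_{y=0}^{i}(ik-(k-1)y+1),\qquad
\sum_{{\bf a}\in i\Delta_k\cap\mathbb{Z}^2} a_2=\sum_{y=0}^{i}y\bigl(ik-(k-1)y+1\bigr)
\end{equation*}
are elementary polynomials in $i$ (alternatively these polynomials can be recovered from Lemma \ref{lem3.5} after enumerating lattice points in $\Delta_k$ and $2\Delta_k$ by hand).

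Substituting these formulas into the $y$-component of \eqref{e1.4} and carrying out the algebraic cancellation collapses the expression to
\begin{equation*}
\vol(\Delta_k)\sum_{{\bf a}\in i\Delta_k\cap\mathbb{Z}^2} a_2 \;-\; i\,E_{\Delta_k}(i)\int_{\Delta_k} y\, dv \;=\; -\frac{i(i+1)\,k(k-1)}{12},
\end{equation*}
which is strictly negative for all $k\ge 2$ and $i\ge 1$. Hence the necessary condition \eqref{e1.1} fails, and Theorem \ref{thm1.2} forces $(X_{\Delta_k},L_{\Delta_k}^i)$ to be Chow unstable. No conceptual obstacle arises; the only step requiring mild care is the final cancellation, which reduces a naively cubic-in-$i$ and quadratic-in-$k$ polynomial to the clean product displayed above.
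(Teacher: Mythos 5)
Your proposal is correct and follows essentially the same route as the paper: both apply the contrapositive of Theorem \ref{thm1.2} and verify that the discrepancy in \eqref{e1.4} is nonzero, and your value $-\tfrac{i(i+1)k(k-1)}{12}$ agrees with the second component of the paper's vector $\tfrac{i(i+1)k(k-1)}{24}\,(k-1,\,-2)^{T}$. The only difference is bookkeeping: you compute just the $y$-coordinate by direct lattice-point slicing, while the paper computes both coordinates of ${\bf s}_{\Delta_k}(t)$ and $E_{\Delta_k}(t)$ via the interpolation of Lemma \ref{lem3.5}; either suffices since one nonvanishing component already violates \eqref{e1.1}.
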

\begin{proof}
It is easy to see that
\begin{equation}\label{e3.8}
\int_{\Delta_k}{\bf x}dv=\frac16
\begin{pmatrix}
k^2+k+1\\
k+2
\end{pmatrix},
\end{equation}
\begin{equation}\label{e3.9}
\vol(\Delta_k)=\frac12 (k+1),
\end{equation}
\begin{equation}\label{e3.10}
E_{\Delta_k}(1)=k+3,\ E_{\Delta_k}(2)=3k+6
\end{equation}
and
\begin{equation}\label{e3.11}
{\bf s}_{\Delta_k}(1)=\frac12
\begin{pmatrix}
k^2+k+2\\
4
\end{pmatrix},\ 
{\bf s}_{\Delta_k}(2)=\frac12
\begin{pmatrix}
5k^2+5k+8\\
2k+16
\end{pmatrix}
\end{equation}
hold. Hence by Lemma \ref{lem3.5},
\begin{equation}\label{e3.12}
E_{\Delta_k}(t)=\frac12\{(k+1)t^2+(k+3)t+2\}
\end{equation}
and
\begin{equation}\label{e3.13}
{\bf s}_{\Delta_k}(t)=\frac{t}{12}
\begin{pmatrix}
2(k^2+k+1)t^2+3(k^2+k+2)t+k^2+k+4\\
2(k+2)t^2+12t+8-2k
\end{pmatrix}.
\end{equation}
Therefore
\begin{align*}
\vol(\Delta_k){\bf s}_{\Delta_k}(i)-iE_{\Delta_k}(i)\int_{\Delta_k}{\bf x}dv
&=\frac{i(i+1)k(k-1)}{24}
\begin{pmatrix}
k-1\\
-2
\end{pmatrix}\not=
\begin{pmatrix}
0\\
0
\end{pmatrix}
\end{align*}
holds for any $k\ge 2$ and $i\ge 1$.
\end{proof}

\begin{rem}
In this case, note that $\mathcal F_{\Delta_k,1}$ equals to
$\mathcal F_{\Delta_k,2}$:
$$\mathcal F_{\Delta_k,1}=\mathcal F_{\Delta_k,2}=
\frac{k(k-1)}{24}
\begin{pmatrix}
k-1\\
-2
\end{pmatrix}.
$$
When $k=2$, that is, $X_{\Delta_2}$ is the one point blow-up of
the projective plane,
we can easily calculate the Futaki's integral invariants
$\mathcal F_{\textup{Td}^{(1)}}|_{\mathbb{C}^2}$ 
and $\mathcal F_{\textup{Td}^{(2)}}|_{\mathbb{C}^2}$:
$$
\mathcal F_{\textup{Td}^{(1)}}|_{\mathbb{C}^2}=C_1
\begin{pmatrix}
1\\
-2
\end{pmatrix},\ \ 
\mathcal F_{\textup{Td}^{(2)}}|_{\mathbb{C}^2}=C_2
\begin{pmatrix}
1\\
-2
\end{pmatrix},\ \ C_1,C_2\not=0.
$$
Therefore, in this case \eqref{e1.6} is right:
$$\textup{Lin}_{\mathbb{C}}\{\mathcal F_{\Delta_2,1},\mathcal F_{\Delta_2,2}\}=
\textup{Lin}_{\mathbb{C}}\{\mathcal F_{\text{Td}^{(1)}|_{\mathbb{C}^2}},
\mathcal F_{\text{Td}^{(2)}}|_{\mathbb{C}^2}\}=\mathbb{C}
\begin{pmatrix}
1\\
-2
\end{pmatrix}\subset \mathbb{C}^2.
$$
\end{rem}

We next consider the example given by Nill and Paffenholz in \cite{np}.
It is the toric Fano $7$-fold corresponding to the reflexive Delzant polytope
\begin{equation}\label{e3.14}
\Delta_{NP}:=\{{\bf x}\in \mathbb{R}^7\,|\,
\langle {\bf x},{\bf v}_i\rangle \ge -1,i=1,\dots,12\}.
\end{equation}
Here ${\bf v}_1,\dots,{\bf v}_{12}\in \mathbb{R}^7$ are given by
\begin{align*}
(
{\bf v}_1  \ {\bf v}_2  \ 
{\bf v}_3  \ 
{\bf v}_4  \ 
{\bf v}_5  \ 
&{\bf v}_6  \ 
{\bf v}_7  \ 
{\bf v}_8  \ 
{\bf v}_9  \ 
{\bf v}_{10}  \ 
{\bf v}_{11}  \ 
{\bf v}_{12}
) \\
&=
\left(\begin{array}{rrrrrrrrrrrr}
1 & 0 & 0 & 0 & 0 & -1 & 0 & 0 & 0 & 0 & 0 & 0
\\
0 & 1 & 0 & 0 & -1 & 0 & 0 & 0 & 0 & 0 & 0 & 0
\\
0 & 0 & 1 & -1 & 0 & 0 & 0 & 0 & 0 & 0 & 0 & 0
\\
0 & 0 & 0 & 0 & 0 & 0 & 1 & 0 & 0 & -1 & 0 & 0
\\
0 & 0 & 0 & 0 & 0 & 0 & 0 & 1 & 0 & -1 & 0 & 0
\\
0 & 0 & 0 & 0 & 0 & 0 & 0 & 0 & 1 & -1 & 0 & 0
\\
0 & 0 & 0 & -1 & -1 & -1 & 0 & 0 & 0 & 2 & 1 & -1
\end{array}\right).
\end{align*}

Sano, Yotsutani and the author showed the following in \cite{osy}.

\begin{thm}[\cite{osy}]
Let $\Delta_{NP}$ be the $7$-dimensional reflexive Delzant polytope
given above. Then 
\begin{equation}
\int_{\Delta_{NP}}{\bf x}dv={\bf 0}
\end{equation}
and
\begin{equation}
\begin{pmatrix}
\frac{\partial C_{\Delta_{NP}}}{\partial x_1}(1,\dots,1,t)\\
\vdots\\
\frac{\partial C_{\Delta_{NP}}}{\partial x_n}(1,\dots,1,t)\\
\end{pmatrix}
\not={\bf 0}
\end{equation}
hold.
\end{thm}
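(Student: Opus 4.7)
The plan is to exploit a product decomposition of $\Delta_{NP}$ obtained by slicing in the $x_7$-direction. Reading off the twelve inequalities $\langle \mathbf{x}, \mathbf{v}_i \rangle \geq -1$ from the given matrix, one sees that for $t \in [-1,1]$ the slice $\Delta_{NP} \cap \{x_7 = t\}$ factors as the Cartesian product of the cube $[-1, 1-t]^3$ in $(x_1, x_2, x_3)$ and the simplex $\{x_i \geq -1,\, x_4+x_5+x_6 \leq 1+2t\}$ in $(x_4, x_5, x_6)$. This converts every required 7-dimensional integral or lattice-point sum into a tractable one-dimensional expression in $t$.

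For the first assertion, I would first invoke the $S_3 \times S_3$-symmetry permuting $(x_1, x_2, x_3)$ and, independently, $(x_4, x_5, x_6)$; direct inspection shows this group permutes the facet normals $\mathbf{v}_i$, hence acts on $\Delta_{NP}$, so $\int x_1 = \int x_2 = \int x_3$ and $\int x_4 = \int x_5 = \int x_6$. It therefore suffices to show $\int x_1 = \int x_4 = \int x_7 = 0$. By Fubini,
$$\int_{\Delta_{NP}} x_1\,dv = \int_{-1}^{1} \Bigl(\int_{-1}^{1-t} x_1\, dx_1 \Bigr)(2-t)^2 \cdot \frac{(4+2t)^3}{6}\,dt,$$
and a short simplification writes the integrand as a constant multiple of $t(4-t^2)^3$, which is odd in $t$ and so integrates to zero over $[-1,1]$. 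The computation for $\int x_4$ (via the substitution $y_i = x_i+1$ and the standard first-moment formula for the standard 3-simplex) and that for $\int x_7 = \int_{-1}^{1} t\,(2-t)^3\cdot\frac{(4+2t)^3}{6}\,dt$ again reduce to a constant times $\int_{-1}^{1} t(4-t^2)^3\,dt = 0$.

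For the second assertion, by \eqref{e1.9} it is enough to exhibit a single positive integer $i$ with $\mathbf{s}_{\Delta_{NP}}(i) \neq \mathbf{0}$. I would take $i = 1$ and partition $\Delta_{NP} \cap \mathbb{Z}^7$ by the integer values $x_7 = t \in \{-1, 0, 1\}$; the same product structure gives for each $t$ a lattice-point count $(3-t)^3 \cdot \binom{2t+7}{3}$, with analogous factored expressions for the coordinate sums. A short tabulation then yields, for the first coordinate, $(\mathbf{s}_{\Delta_{NP}}(1))_1 = 2\cdot 16\cdot 10 + 0 + (-1)\cdot 4\cdot 84 = -16 \neq 0$, which finishes the proof.

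The main obstacle is really only bookkeeping: verifying the product decomposition carefully from the given $7 \times 12$ matrix and keeping track of the volumes and lattice-point counts of the cube and simplex factors. Once these are in hand, the oddness of $t(4-t^2)^3$ does the work for the integral identity in (a), and a direct enumeration at $t \in \{-1,0,1\}$ handles the nonvanishing of $\mathbf{s}_{\Delta_{NP}}(1)$ needed for (b).
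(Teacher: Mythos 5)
Your proposal is correct, and it is worth noting at the outset that the paper itself offers no proof of this statement: it is quoted verbatim from \cite{osy}, where the verification is carried out by computing the Hilbert series $C_{\Delta_{NP}}$ and its derivative directly. Your argument is a genuinely different, more elementary and self-contained route. The key observation --- that the twelve inequalities decouple, for fixed $x_7=t\in[-1,1]$, into the cube $[-1,1-t]^3$ in $(x_1,x_2,x_3)$ and the simplex $\{x_j\ge -1,\ x_4+x_5+x_6\le 1+2t\}$ in $(x_4,x_5,x_6)$ --- is correct (I checked the columns of the matrix: $\mathbf{v}_4=-e_3-e_7$, $\mathbf{v}_5=-e_2-e_7$, $\mathbf{v}_6=-e_1-e_7$, $\mathbf{v}_{10}=-e_4-e_5-e_6+2e_7$, $\mathbf{v}_{11}=e_7$, $\mathbf{v}_{12}=-e_7$), and each slice has volume $\tfrac{8}{6}(4-t^2)^3$, so all three first moments reduce to the odd integral $\int_{-1}^{1}t(4-t^2)^3\,dt=0$ as you say. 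For the second assertion, reducing via \eqref{e1.9} to exhibiting one $i$ with $\mathbf{s}_{\Delta_{NP}}(i)\ne\mathbf{0}$ is legitimate, and your count at $i=1$ checks out: the slice $x_7=t$ contributes $(3-t)^3\binom{2t+7}{3}$ lattice points, the $x_1$-sums over $\{-1,\dots,1-t\}$ are $2,0,-1$ for $t=-1,0,1$, and $2\cdot 16\cdot 10+0-1\cdot 4\cdot 84=-16\ne 0$. (One then finds $\mathbf{s}_{\Delta_{NP}}(1)=(-16,-16,-16,16,16,16,32)$.) What your approach buys is a hand-checkable verification that does not require the machinery of \cite{osy}; what it loses is only the extra information that computation of the full Hilbert series provides. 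The only point to flag is that you should state explicitly that the lattice points of $\mathcal{C}(\Delta)$ at height $i$ are exactly $(i\Delta\cap\mathbb{Z}^n)\times\{i\}$, which is what makes \eqref{e1.9} applicable; this is immediate from the definition of $\mathcal{C}(\Delta)$ but is the one implicit step in your reduction.
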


Therefore $X_{\Delta_{NP}}$ admits K\"ahler-Einstein metrics
by the theorem of Wang and Zhu \cite{wz},
but $(X_{\Delta_{NP}},-K_{X_{\Delta_{NP}}})$ is asymptotically
Chow unstable. Moreover, by Theorem \ref{thm1.3}, we see that
there exists a positive integer $i_1$ such that
$(X_{\Delta_{NP}},(-K_{\Delta_{NP}})^i)$ is
Chow unstable for any $i\ge i_1$.

\end{document}